\newtheorem*{convdual}{Convex duality}
\newtheorem{teo}{Theorem}
\newtheorem*{teo*}{Main Theorem}
\newtheorem{lm}[teo]{Lemma}
\newtheorem{prop}[teo]{Proposition}
\newtheorem{coro}[teo]{Corollary}
\theoremstyle{definition}
\newtheorem{exa}[teo]{Example}
\newtheorem{defi}[teo]{Definition}
\newtheorem{oss}[teo]{Remark}
\newtheorem*{ack}{Acknowledgements}
\newcommand{\op}[1]{{\rm{#1}}}
\numberwithin{equation}{section}
\numberwithin{teo}{section}
\author{Lorenzo Brasco}
\address{{\bf L. B.}, Laboratoire d'Analyse, Topologie, Probabilit\'es, Aix-Marseille Universit\'e, 39 Rue Fr\'ed\'eric Joliot Curie, 13453 Marseille Cedex 13, France}
\email{brasco@cmi.univ-mrs.fr}
\author{Mircea Petrache}
\address{{\bf M. P.}, ETH, Departement Mathematik, R\"amistrasse 101, 
8092 Z\"urich, Switzerland}
\email{mircea.petrache@math.ethz.ch}
\keywords{Monge-Kantorovich problem, Beckmann problem, Smirnov Theorem, flat norm}
\subjclass[2010]{49K20, 46E35}
\title{A continuous model of transportation revisited}
\begin{document}

\maketitle
\begin{abstract}
We review two models of optimal transport, where congestion effects during the transport can be possibly taken into account. The first model is Beckmann's one, where the transport activities are modeled by vector fields with given divergence. The second one is the model by Carlier et al. (SIAM J Control Optim 47: 1330--1350, 2008), which in turn is the continuous reformulation of Wardrop's model on graphs.
We discuss the extensions of these models to their natural functional analytic setting and show that they are indeed equivalent, by using Smirnov decomposition theorem for normal $1-$currents.
\end{abstract}

\tableofcontents

\section{Introduction}
\subsection{Theoretical background} The present work is motivated by the study of transport problems for distributions started in \cite{BBD} and \cite{CJS}, which we desire to try and connect to related works in the theory of currents presented in \cite{AK,PS,S}. One of the important motivations for this problem is the study of distributions of the form
$$
\sum_{i=1}^\infty(\delta_{P_i}-\delta_{Q_i}),\qquad \mbox{ with } \qquad\sum_{i=1}^\infty|P_i-Q_i|<\infty,
$$
as in \cite{Po}. Such distributions arise as topological singularities in several geometric variational problems as described for example in \cite{Brez,HR,PR,Riv,sandier}.
\par
To start with, we formally define three variational problems which can be settled (for simplicity) on the closure of an open convex subset $\Omega\subset\mathbb{R}^N$ having smooth boundary.
For the moment we are a little bit imprecise about the datum $f$ but we will properly settle our hypotheses later.
The first problem is the minimization of the total variation of a Radon vector measure under a divergence constraint: 
\begin{displaymath}
\label{1beckmann}
\tag{$\mathcal{B}$}
\min_{V}\left\{\int_\Omega d|V|\,:\,\:-\op{div}\,V=f,\ V\cdot \nu_\Omega=0\right\}.
\end{displaymath}
The above problem can be connected by duality with the following one, called the Kantorovich problem:
\begin{displaymath}
 \label{eq:kantorovich}
\tag{$\mathcal{K}$}
\max_{\phi}\left\{\langle f,\phi\rangle: \|\nabla\phi\|_{L^\infty(\Omega)}\leq 1\right\},
\end{displaymath}
where now the variable $\phi$ is a Lipschitz function and $\langle\cdot,\cdot\rangle$ represents a suitable duality pairing. Finally the third problem is the minimization of the total length
\begin{displaymath}
\label{eq:transport}
\tag{$\mathcal{M}$}
\min_Q\left\{\int_{\mathcal P}\ell(\gamma)\,d Q(\gamma)\, :\, (e_1 - e_0)_\#Q=f\right\},
\end{displaymath}
where $\mathcal P$ is the space of Lipschitz continuous paths $\gamma:[0,1]\to \Omega$, the {\it length functional} $\ell$ is defined by
\[
\ell(\gamma)=\int_0^1 |\gamma'(t)|\, dt,
\] 
$e_0,e_1$ are the evaluation functions giving the starting and ending points of a path and the variable $Q$ is a measure concentrated on $\mathcal P$.
\par
The classical setting for the above problems is when $f$ is of the form $f=f^+-f^-$ where $f^+$ and $f^-$ are positive measures on $\Omega$ having the same mass (for example conventionally one can consider them to be probability measures). We point out that in this case a more familiar formulation of \eqref{eq:transport} is the so-called {\it Monge-Kantorovich problem}
\begin{displaymath}
\label{MK}
\tag{$\mathcal{M}'$}
\min_\eta\left\{\int_{\Omega\times\Omega} |x-y|\, d\eta(x,y)\, :\, (\pi_x)_\# \eta=f^+\quad\mbox{ and }\quad (\pi_y)_\# \eta=f^-\right\},
\end{displaymath}
where $\pi_x,\pi_y:\Omega\times\Omega\to\Omega$ stand for the projections on the first and second variable, respectively. It is useful to recall that the link between \eqref{eq:transport} and \eqref{MK} is given by the fact that if $\eta_0$ is optimal for Monge-Kantorovich problem then the measure which concentrates on {\it transport rays} i.e.
\[
Q_0=\int \delta_{\overline{x\,y}}\, d\eta_0(x,y),\qquad \mbox{ where }\ \overline{x\,y} \ \mbox{ stands for the segment connecting $x$ and $y$},
\]
is optimal in \eqref{eq:transport} and
\[
\int_{\mathcal{P}} \ell(\gamma)\, dQ_0(\gamma)=\int_{\Omega\times\Omega} |x-y|\, d\eta_0(x,y).
\]
When $f$ has the above mentioned form $f^+ - f^-$ the equivalence of the three problems above is well understood. The equivalence of \eqref{MK}=\eqref{eq:transport} and \eqref{eq:kantorovich} is the classical Kantorovich duality (see \cite{Ka}), while that between \eqref{1beckmann} and \eqref{eq:kantorovich}  seems to have been first identified in \cite{St}.
\par
Recently the equivalence of the above three problems has been shown in \cite{BBD} for $f$ belonging to a wider class, i.e. when $f$ is in the completion of the space of zero-average measures with respect to the norm dual to the $C^1$ (or flat) norm. This wider space was studied in \cite{Hanin} and characterized recently in \cite{BBD,BCJ}. A different point of view is also available in \cite{Maly}, where the space of such $f$ is called $W^{-1,1}$.

\subsection{Goals of the paper}
Our starting observation is that problem \eqref{1beckmann} pertains to a wide class of optimal transport problems introduced by Martin J. Beckmann in \cite{Be}, which are of the form
\begin{displaymath}
\label{Hbeckmann}
\tag{$\mathcal{B}_\mathcal{H}$}
\min_V\left\{\int_\Omega \mathcal{H}(V)\, dx\, :\, -\mathrm{div}\, V=f,\, V\cdot \nu_\Omega=0\right\},\qquad \mbox{ where }\ c_1\,|z|^p\le\mathcal{H}(z)\le c_2\,|z|^p,
\end{displaymath}
for a suitable density-cost convex function $\mathcal{H}:\mathbb{R}^N\to\mathbb{R}^+$ and $p\ge 1$. For a problem of this type the question of finding equivalent formulations of the form \eqref{eq:kantorovich} and \eqref{eq:transport} has already been addressed in \cite{BCS} (see also \cite{BraC2}) under some restrictive assumptions on $f$, like for example 
$$
f=f^+-f^-\qquad \mbox{ with}\quad  f^+,f^-\in L^p(\Omega)\quad \mbox{ and }\quad \int_\Omega f^+=\int_\Omega f^-=0.
$$
The goal of this paper is to complement and refine this analysis, first of all by studying problem \eqref{Hbeckmann} in its natural functional analytic setting, i.e. when $f$ belongs to a dual Sobolev space $W^{-1,p}$ (whose elements are not measures, in general). By expanding the analysis in \cite{BraC2,BCS} we will also see that alternative formulations of the type \eqref{eq:kantorovich} and \eqref{eq:transport} are still possible for \eqref{Hbeckmann} in this extended setting. These formulations are still well-posed on the dual space $W^{-1,p}$ and equivalence can be proved in this larger space. The problem corresponding to \eqref{eq:kantorovich} will now have the form (see Section 3 for more details)
\begin{displaymath}
\label{Hkantorovich}
\tag{$\mathcal{K}_\mathcal{H}$}
\max_{\phi}\, \left[\langle f,\phi\rangle -\int_\Omega \mathcal{H}^*(\nabla \phi)\, dx\right],
\end{displaymath}
and the equivalence with \eqref{Hbeckmann} will just follow by standard convex duality arguments (which are later recalled, for the convenience of the reader). On the contrary, in the proof of the equivalence between \eqref{Hbeckmann} and its Lagrangian formulation
\begin{displaymath}
\label{Htransport}
\tag{$\mathcal{M}_\mathcal{H}$}
\min_{Q}\, \left\{\int_\Omega \mathcal{H}(i_Q)\, dx\, :\, (e_1-e_0)_\# Q=f\right\},
\end{displaymath}
some care is needed and we will require to $f$ be a finite measure belonging to $W^{-1,p}$. Here the measure $i_Q$ will be some sort of {\it transport density}\footnote{When $\mathcal{H}(t)=|t|$ problem \eqref{Htransport} is again the Monge-Kantorovich one and $i_Q$ for an optimal $Q$ is nothing but the usual concept of transport density, see \cite{BB,DP1,FM}.} generated by $Q$, which takes into account the amount of work generated in each region by our distribution of curves $Q$ (see Section \ref{sec:lagrangian} for the precise definition). 
In particular the proof of this equivalence will point out another not emphasized connection to Geometric Measure Theory.
\par
The main result of this paper can be formulated as follows (see Theorems \ref{lm:duality} and \ref{th:wardbeck} for more precise statements):
\begin{teo*}
Let $1<p<\infty$. Suppose $\Omega\subset\mathbb R^N$ is the closure of a smooth bounded open set, let $f\in W^{-1,p}(\Omega)$ and let $\mathcal H$ be a strictly convex function having $p-$growth. Then the minimum in ($\mathcal B_{\mathcal H}$) and the maximum in ($\mathcal K_{\mathcal H}$) are achieved and coincide. Moreover the unique minimizer $V$ of ($\mathcal B_{\mathcal H}$) and any maximizer $v$ of ($\mathcal K_{\mathcal H}$) are linked by the relation $\nabla v\in\partial\mathcal{H}(V)$, as specified in Theorem \ref{lm:duality}.
\par
If in addition $f$ is a Radon measure then we have the following relationship among the optimizers of ($\mathcal B_{\mathcal H}$) and ($\mathcal M_{\mathcal H}$):
\begin{enumerate}
\item[(i)] the unique minimizer of ($\mathcal B_{\mathcal H}$) corresponds to a minimizer of ($\mathcal M_{\mathcal H}$) in the sense of Proposition \ref{smirnovlp};
\vskip.2cm
 \item[(ii)] each minimizer of ($\mathcal M_{\mathcal H}$) corresponds to the unique minimizer of ($\mathcal B_{\mathcal H}$) in the sense of Proposition \ref{smirnovlp}.
\end{enumerate}
\end{teo*}
The connection of the above theorem to Geometric Measure Theory lies in the basic theory of \emph{normal $1-$currents}, whose basic steps are recalled in the (long) appendix at the end of the paper. Indeed, in order to show equivalence of  \eqref{Hbeckmann} and \eqref{Htransport} 
our cornerstone is {\it Smirnov decomposition theorem} for $1-$currents. 
\vskip.2cm\par
For the sake of completeness and in order to neatly motivate the studies performed in this paper it is worth recalling  that the proof of this equivalence in \cite{BCS} was based on the {\it Dacorogna-Moser construction} to produce transport maps (see \cite{DM}), which has revealed to be a powerful tool for optimal transport problems\footnote{It is worth remarking that the first proof of the existence of an optimal transport map for problem \eqref{MK}, more than 200 years after Monge stated it, was based on a clever use of this construction (see \cite{EG}).}. In a nutshell, this method consists in associating to the ``static'' vector field $V$ which is optimal for \eqref{Hbeckmann} the following dynamical system
\[
\partial \mu_t+\mathrm{div}\left(\frac{V}{(1-t)\, f^++t\, f^-}\, \mu_t\right)=0,\qquad \mu_0=f^+,
\]
i.e. a continuity equation with driving velocity field $\widetilde V_t$ given by $V$ rescaled by the linear interpolation between $f^+$ and $f^-$. Assuming that one can give a sense (either deterministic or probabilistic) to the flow of $\widetilde V_t$, the construction of the measure $Q_V$ concentrated on the flow lines of $\widetilde V_t$ paves the way to the equivalence between the Lagrangian model \eqref{Htransport} and \eqref{Hbeckmann} (see \cite{BraC2,BCS} for more details). 
 
\subsection{Plan of the paper} 
In Section \ref{sec:well} we describe the function space $W^{-1,p}(\Omega)$ and we prove the existence of a minimizer for $(\mathcal B_{\mathcal H})$. Section \ref{sec:dual} treats the equivalence of $(\mathcal B_{\mathcal H})$ with $(\mathcal K_{\mathcal H})$ by appealing to classical convex analysis results. The aim of Section \ref{sec:lagrangian} is to introduce the Lagrangian counterpart of Beckmann's model and to show that the two models are equivalent. 
A self-contained Appendix complements the paper. There we introduce relevant concepts from Geometric Measure Theory and we translate Smirnov's decomposition theorem into the language of $L^1$ vector fields.

\section{Well-posedness of Beckmann's problem}

\label{sec:well}

Let $\Omega\subset\mathbb{R}^N$ be the closure of an open bounded connected set having smooth boundary. In what follows $\Omega$ {\it will always be compact}.
Given $1<q<\infty$ we indicate with $W^{1,q}(\Omega)$ the usual Sobolev space of $L^q(\Omega)$ functions whose distributional gradient is in $L^q(\Omega;\mathbb{R}^N)$ as well. We then define the quotient space
\[
\dot W^{1,q}(\Omega)=\frac{W^{1,q}(\Omega)}{\sim},
\]
where $\sim$ is the equivalence relation defined by
\[
u\sim v\quad \Longleftrightarrow\quad \mbox{ there exists } c\in\mathbb{R} \mbox{ such that }\quad u(x)-v(x)=c\quad \mbox{ for a.e. }x\in\Omega.
\]
When needed the elements of $\dot W^{1,q}(\Omega)$ will be identified with functions in $W^{1,q}(\Omega)$ having zero mean.
We endow the space $\dot W^{1,q}(\Omega)$ with the norm
\[
\|u\|_{\dot W^{1,q}(\Omega)}:=\left(\int_\Omega |\nabla u|^q\, dx\right)^\frac{1}{q},\qquad \dot u\in W^{1,q}(\Omega),
\]
then we denote by $\dot W^{-1,p}(\Omega)$ its dual space, equipped with the dual norm. The latter is defined as usual by
\[
\|T\|_{\dot W^{-1,p}(\Omega)}:=\sup \left\{\langle T,\varphi\rangle\, :\,\varphi\in \dot W^{1,q}(\Omega),\, \|\varphi\|_{\dot W^{1,q}}=1\right\},
\]
where $p=q/(q-1)$.
We start recalling the following basic fact.
\begin{lm}
\label{lm:norma}
Let $T\in \dot W^{-1,p}(\Omega)$. Then
\[
\|T\|_{\dot W^{-1,p}(\Omega)}=p^\frac{1}{p}\,\left[\max_{\varphi\in \dot W^{1,q}(\Omega)} |\langle T,\varphi\rangle|-\frac{1}{q}\, \int_\Omega |\nabla \varphi|^q\, dx\right]^\frac{1}{p}.
\]
\end{lm}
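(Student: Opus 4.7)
The identity is a direct consequence of one-variable Young-type optimization combined with homogeneity, together with a soft compactness argument for the attainment of the maximum. I would proceed as follows.

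\textbf{Step 1: Reduction by homogeneity.} Write an arbitrary nonzero $\varphi\in\dot W^{1,q}(\Omega)$ as $\varphi=t\,\psi$ with $t>0$ and $\|\psi\|_{\dot W^{1,q}}=1$; by possibly replacing $\psi$ by $-\psi$, I may assume $\langle T,\psi\rangle\ge 0$. Then
\[
|\langle T,\varphi\rangle|-\frac{1}{q}\int_\Omega|\nabla\varphi|^q\,dx= t\,\langle T,\psi\rangle-\frac{t^q}{q}.
\]

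\textbf{Step 2: One-variable optimization.} For fixed $\psi$ with $\|\psi\|_{\dot W^{1,q}}=1$, the function $t\mapsto t\,\langle T,\psi\rangle-t^q/q$ is concave on $[0,\infty)$ and its unique maximum is attained at $t_*=\langle T,\psi\rangle^{1/(q-1)}$, with maximal value
\[
\langle T,\psi\rangle^{\,q/(q-1)}-\frac{1}{q}\,\langle T,\psi\rangle^{\,q/(q-1)}=\frac{1}{p}\,\langle T,\psi\rangle^{\,p},
\]
where I used $q/(q-1)=p$ and $1-1/q=1/p$. Taking the supremum over all $\psi$ with $\|\psi\|_{\dot W^{1,q}}=1$ gives
\[
\sup_{\varphi\in \dot W^{1,q}(\Omega)}\left[|\langle T,\varphi\rangle|-\frac{1}{q}\int_\Omega|\nabla\varphi|^q\,dx\right]=\frac{1}{p}\,\|T\|_{\dot W^{-1,p}(\Omega)}^{\,p},
\]
which after rearrangement is exactly the claimed identity, provided the supremum is a maximum.

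\textbf{Step 3: Attainment.} To justify writing $\max$ in place of $\sup$, I would use that $\dot W^{1,q}(\Omega)$ is reflexive for $1<q<\infty$, that $\varphi\mapsto|\langle T,\varphi\rangle|$ is weakly continuous, that $\varphi\mapsto\int_\Omega|\nabla\varphi|^q\,dx$ is weakly lower semicontinuous, and that the functional
\[
F(\varphi):=|\langle T,\varphi\rangle|-\frac{1}{q}\int_\Omega|\nabla\varphi|^q\,dx
\]
is coercive (the linear term is controlled by $\|T\|_{\dot W^{-1,p}}\,\|\varphi\|_{\dot W^{1,q}}$, dominated for large norms by the $q$-th power term since $q>1$). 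Hence $F$ is weakly upper semicontinuous and attains its supremum on $\dot W^{1,q}(\Omega)$ along a bounded maximizing sequence.

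There is no real obstacle here: the only point that deserves attention is the attainment in Step~3, and it is handled by standard weak-compactness arguments in the reflexive space $\dot W^{1,q}(\Omega)$. The rest is the explicit Legendre transform of $t\mapsto t^q/q$, which yields precisely the conjugate exponent $p$ and the prefactor $p^{1/p}$.
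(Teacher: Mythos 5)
Your proof is correct and follows essentially the same strategy as the paper: both exploit the scale invariance of the problem to reduce to a one-variable Legendre transform of $t\mapsto t^q/q$, yielding the conjugate exponent $p$ and the prefactor $p^{1/p}$. The paper introduces a scaling parameter $\lambda$ applied to a general $\varphi$, while you normalize $\|\psi\|_{\dot W^{1,q}}=1$ and scale by $t$; these are equivalent, and your Step~3 (weak compactness giving attainment) makes explicit a point the paper's proof leaves implicit.
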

\begin{proof}
For every $\varphi\in \dot W^{1,q}(\Omega)$ we have
\[
|\langle T,\varphi\rangle|-\frac{1}{q}\, \int_\Omega |\nabla \varphi|^q\, dx\le \sup_{\lambda\ge 0} \left[\lambda\, |\langle T,\varphi\rangle|-\frac{\lambda^q}{q}\, \int_\Omega |\nabla \varphi|^q\, dx\right].
\]
On the other hand the supremum on the right is readily computed: this corresponds to the choice
\[
\lambda=|\langle T,\varphi\rangle|^\frac{1}{q-1}\, \left(\int_\Omega |\nabla \varphi|^q\, dx\right)^{-\frac{1}{q-1}},
\]
which gives
\[
\sup_{\lambda\ge 0} \left[\lambda\, |\langle T,\varphi\rangle|-\frac{\lambda^q}{q}\, \int_\Omega |\nabla u|^q\, dx\right]=\frac{1}{p}\, \left(\frac{|\langle T,\varphi\rangle|}{\|\varphi\|_{\dot W^{1,q}}}\right)^p.
\]
Passing to the supremum over $\varphi\in \dot W^{1,q}(\Omega)$ and using the definition of the dual norm we get the thesis.
\end{proof}
We also denote by $\mathcal{E}'_1(\Omega)$ the space of distributions of order $1$ with (compact) support in $\Omega$. In what follows we tacitly identify this space with the dual of the space $C^1(\Omega)$, endowed with the norm
\[
\|\varphi\|_{C^1(\Omega)}=\sup_{x\in\Omega}|\varphi(x)|+\sup_{x\in\Omega}|\nabla \varphi(x)|.
\]
We denote by $\nu_\Omega$ the outer normal unit vector to $\partial\Omega$. We have the following characterization for the dual space $\dot W^{-1,p}(\Omega)$.  
\begin{lm}
\label{lm:cara}
Let $p=q/(q-1)$. We say that a vector field $V\in L^p(\Omega;\mathbb{R}^N)$ and $T\in \mathcal{E}'_1(\Omega)$ satisfies 
\begin{equation}
\label{neumann}
-\mathrm{div\,}V=T\quad \mbox{ in } \Omega, \qquad V\cdot \nu_\Omega=0\quad \mbox{ on } \partial\Omega,
\end{equation}
if
\[
\int_\Omega \nabla \varphi\cdot V\, dx=\langle T,\varphi\rangle,\qquad \mbox{ for every } \varphi \in C^1({\Omega}).
\]
If we set
\[
\mathcal{E}'_{1,p}(\Omega)=\{T\in \mathcal{E}'_1(\Omega)\, :\, \mbox{ there exists } V\in L^p(\Omega;\mathbb{R}^N) \mbox{ satisfying } \eqref{neumann}\},
\]
we then have the identification
\[
\dot W^{-1,p}(\Omega)=\mathcal{E}'_{1,p}(\Omega).
\]
\end{lm}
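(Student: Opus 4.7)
The claim is the set-theoretic equality $\dot W^{-1,p}(\Omega)=\mathcal E'_{1,p}(\Omega)$, so I would split the argument in the two inclusions, of which only one is non-trivial.

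The inclusion $\mathcal E'_{1,p}(\Omega)\subseteq\dot W^{-1,p}(\Omega)$ is the easy direction. Given $T\in\mathcal E'_1(\Omega)$ and $V\in L^p(\Omega;\mathbb R^N)$ satisfying \eqref{neumann}, the defining identity
\[
\langle T,\varphi\rangle=\int_\Omega\nabla\varphi\cdot V\,dx,\qquad \varphi\in C^1(\Omega),
\]
immediately implies, by taking $\varphi$ to be a constant, that $T$ vanishes on constants; hence $T$ descends to a well-defined linear form on $\dot W^{1,q}(\Omega)$. H\"older's inequality then yields $|\langle T,\varphi\rangle|\le\|V\|_{L^p}\|\nabla\varphi\|_{L^q}$, and the density of $C^1(\Omega)$ in $W^{1,q}(\Omega)$ allows to extend the identity to every $\varphi\in\dot W^{1,q}(\Omega)$. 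This also shows the bound $\|T\|_{\dot W^{-1,p}}\le\|V\|_{L^p}$.

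The non-trivial inclusion $\dot W^{-1,p}(\Omega)\subseteq\mathcal E'_{1,p}(\Omega)$ is the heart of the lemma and the step I expect to require the most care. The plan is to use Hahn-Banach together with the Riesz representation theorem in $L^q(\Omega;\mathbb R^N)$. More precisely, consider the gradient map
\[
\nabla\,:\,\dot W^{1,q}(\Omega)\longrightarrow L^q(\Omega;\mathbb R^N),\qquad \dot\varphi\longmapsto\nabla\varphi,
\]
which, by definition of the norm on $\dot W^{1,q}(\Omega)$, is a \emph{linear isometry} onto its image $X\subseteq L^q(\Omega;\mathbb R^N)$. Given $T\in\dot W^{-1,p}(\Omega)$, define on $X$ the continuous linear functional $L(\nabla\varphi):=\langle T,\varphi\rangle$, which has norm exactly $\|T\|_{\dot W^{-1,p}(\Omega)}$. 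By the Hahn-Banach theorem, $L$ extends to a continuous linear functional on the whole $L^q(\Omega;\mathbb R^N)$ with the same norm. The Riesz representation theorem then yields a vector field $V\in L^p(\Omega;\mathbb R^N)$ such that $L(w)=\int_\Omega w\cdot V\,dx$ for every $w\in L^q(\Omega;\mathbb R^N)$, with $\|V\|_{L^p}=\|T\|_{\dot W^{-1,p}}$.

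To conclude, I would check that the associated $T$ lies in $\mathcal E'_1(\Omega)$ and satisfies \eqref{neumann}. Since $T$ originally acts on equivalence classes modulo constants, we extend it to $C^1(\Omega)$ by the unique rule $\widetilde T(\varphi):=\langle T,\varphi-\bar\varphi\,\rangle$ (with $\bar\varphi$ the mean of $\varphi$); by construction $\widetilde T$ annihilates constants, is continuous for the $C^1$-norm, and on $\dot W^{1,q}(\Omega)$ coincides with $T$. For any $\varphi\in C^1(\Omega)$ one has $\nabla\varphi=\nabla(\varphi-\bar\varphi)\in X$, hence
\[
\widetilde T(\varphi)=\langle T,\varphi-\bar\varphi\,\rangle=L(\nabla(\varphi-\bar\varphi))=\int_\Omega\nabla\varphi\cdot V\,dx,
\]
which is precisely the weak formulation of $-\mathrm{div}\,V=\widetilde T$ in $\Omega$ with $V\cdot\nu_\Omega=0$ on $\partial\Omega$. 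This proves $T\in\mathcal E'_{1,p}(\Omega)$ and, as a byproduct, the isometric identification $\|T\|_{\dot W^{-1,p}}=\min\{\|V\|_{L^p}\,:\,V\text{ solves \eqref{neumann}}\}$. The delicate point to keep track of is the bookkeeping between the quotient space $\dot W^{1,q}(\Omega)$ and full $C^1(\Omega)$: everything works because the Neumann constraint $V\cdot\nu_\Omega=0$ is exactly the condition that kills the boundary term when integrating by parts against constants.
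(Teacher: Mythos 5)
Your proof is correct, but it takes a genuinely different route from the paper's on the nontrivial inclusion $\dot W^{-1,p}(\Omega)\subseteq\mathcal E'_{1,p}(\Omega)$. The paper proceeds variationally: given $T\in\dot W^{-1,p}(\Omega)$, it solves (by the Direct Methods) the Neumann problem
\[
\sup_{v\in\dot W^{1,q}(\Omega)}\langle T,v\rangle-\frac{1}{q}\int_\Omega|\nabla v|^q\,dx,
\]
and reads off the vector field as $V=|\nabla u|^{q-2}\nabla u\in L^p$ from the Euler--Lagrange equation of the maximizer $u$. Your argument instead views $\nabla$ as an isometric embedding $\dot W^{1,q}(\Omega)\hookrightarrow L^q(\Omega;\mathbb R^N)$, extends the functional $L(\nabla\varphi)=\langle T,\varphi\rangle$ from the image to all of $L^q$ by Hahn--Banach, and then represents the extension by Riesz. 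Both arguments are complete and both yield the additional information that one can choose $V$ with $\|V\|_{L^p}=\|T\|_{\dot W^{-1,p}}$; the paper postpones that statement to Corollary \ref{carattdivLp}, obtaining it via the duality Theorem \ref{lm:duality}. What the paper's route buys is an explicit, constructive $V$ tied to the $q$-Laplacian, which foreshadows the regularity discussion later in the text; what yours buys is greater brevity and independence from convexity/lower-semicontinuity considerations (no need for a well-posed minimization). A minor remark on well-definedness of your $L$: it relies on $\Omega$ being connected so that $\nabla\varphi_1=\nabla\varphi_2$ forces $\varphi_1-\varphi_2$ to be constant; this is guaranteed by the standing assumptions on $\Omega$, but is worth flagging explicitly since it is precisely where the passage to the quotient $\dot W^{1,q}(\Omega)$ is used.
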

\begin{proof}
Let $T\in \dot W^{-1,p}(\Omega)$. We observe that then $T\in\mathcal{E}'_1(\Omega)$ as well. Now consider the following maximization problem
\[
\sup_{v\in \dot W^{1,q}(\Omega)} \langle T,v\rangle -\frac{1}{q}\, \int_\Omega |\nabla v|^q\, dx.
\]
By means of the Direct Methods, it is not difficult to see that there exists a (unique) maximizer $u\in \dot W^{1,q}(\Omega)$ for this problem. Moreover such a maximizer satisfies the relevant Euler-Lagrange equation given by
\[
\int_\Omega |\nabla u|^{q-2}\, \nabla u\cdot \nabla \varphi \, dx=\langle T,\varphi\rangle,\qquad \mbox{ for every } \varphi\in \dot W^{1,q}(\Omega).
\] 
By taking $V=|\nabla u|^{q-2}\, \nabla u\in L^p(\Omega;\mathbb{R}^N)$, the previous identity implies $T\in \mathcal{E}'_{1,p}(\Omega)$.
\vskip.2cm\noindent
Conversely let us take $T\in \mathcal{E}'_{1,p}(\Omega)$. Then for every $\varphi \in C^1({\Omega})$ equation \eqref{neumann} implies
\[
|\langle T,\varphi\rangle|=\left|\int_\Omega \nabla \varphi\cdot V\, dx\right|\le \|\varphi\|_{\dot W^{1,q}}\, \|V\|_{L^p(\Omega)}.
\]
Using the density of $C^1(\Omega)$ in $\dot W^{1,q}(\Omega)$ we obtain that $T$ can be extended in a unique way as an element (that we still denote $T$ for simplicity) of $\dot W^{-1,q}(\Omega)$. This extension satisfies
\[
\|T\|_{\dot W^{-1,p}(\Omega)}\le \|V\|_{L^p(\Omega)},
\]
as can be seen by taking the supremum in the previous inequality.
\end{proof}
\begin{oss}
We remark that the elements of $\mathcal{E}'_{1,p}(\Omega)$ have ``zero average'' i.e.
\[
\langle T,1\rangle=0,
\]
as follows by testing the weak formulation of \eqref{neumann} with $\varphi\equiv 1$. This is coherent with the previous identification $\dot W^{-1,p}(\Omega)=\mathcal{E}'_{1,p}(\Omega)$ since by construction the space $\dot W^{1,q}(\Omega)$ does not contain any non trivial constant function.
\end{oss}
\begin{exa}
 \label{curvette1}
Consider the measure $T=\delta_a-\delta_b$ for two points $a\neq b\in\mathbb R^N$. We claim that 
$$
T=\delta_a-\delta_b\in \dot W^{-1,p}(\Omega)\qquad \mbox{ if and only if }\qquad 1\le p<N/(N-1),
$$ 
where $\Omega$ is a sufficiently large ball containing $a,b$ in its interior.
We prove this by using the characterization of Lemma \ref{lm:cara}. 
\vskip.2cm\noindent
Suppose indeed that there exists some $V\in L^p(\Omega)$ such that $-\op{div}\,V=T$. We pick a ball $B_r(a)$ centered at $a$ and having radius $r$ such that $2\,r<|a-b|$. Then for each $\varepsilon< r$ we consider a $C^1_0(B_r(a))$ function $\eta_\varepsilon$ such that
\[
\eta_\varepsilon\equiv 1\quad \mbox{ in } B_{r-\varepsilon}(a)\qquad \mbox{ and }\qquad \|\nabla \eta_\varepsilon\|_{L^\infty}\le C\, \varepsilon^{-1}.
\]
Thanks to our assumption we have
$$
1=\langle T,\eta_\varepsilon\rangle=\int_{B_r(a)} V\cdot \nabla \eta_\varepsilon\, dx,$$
so that 
\[
\int_{B_{r}(a)\setminus B_{r-\varepsilon}(a)} |V|\, dx\ge \frac{\varepsilon}{C}.
\]
By H\"older inequality this easily implies a lower bound on the $L^p$ norm of $V$, namely
\[
\begin{split}
\int_{B_r(a)} |V|^p\, dx\ge \varepsilon^p\, |{B_{r}(a)\setminus B_{r-\varepsilon}(a)}|^{1-p}&=C_{N,p}\, \varepsilon^{p}\, r^{N\,(1-p)}\, \left[1-\left(1-\frac{\varepsilon}{r}\right)^N\right]^{1-p}.\\
\end{split}
\]
We now make the choice $\varepsilon=r/2$, so that from the previous we can infer
\[
\int_{B_r(a)} |V|^p\, dx\ge \widetilde C_{N,p}\, r^{p+N\, (1-p)}=\widetilde C_{N,p}\, r^{N-p\, (N-1)}.
\]
The previous estimate clearly contradicts the assumption $V\in L^p(\Omega)$ if the exponent $N-p\,(N-1)$ is not strictly positive. Therefore we see by Lemma \ref{lm:cara} that $p<N/(N-1)$ is a necessary condition for $T\in \dot W^{-1,p}(\Omega)$.
\vskip.2cm\noindent
This condition on $p$ is also sufficient for $T$ to belong to $\dot W^{-1,p}(\Omega)$, as we now proceed to show. Set $2\,\tau=|a-b|$ and for simplicity assume that $a=(-\tau,0,\dots,0)$ and $b=(\tau,0,\dots,0)$. We use the notation $x=(x_1,x')$ for a generic point in $\mathbb{R}^N$, where $x'\in\mathbb{R}^{N-1}$. We define the following vector field 
\[
V_{a,b}(x)=\left\{\begin{array}{cc}
\displaystyle\frac{(x_1+\tau,x')}{(x_1+\tau)^N},&\mbox{ if } |x'|\le \tau \mbox{ and } |x'|-\tau\le x_1\le 0,\\
&\\
 \displaystyle\frac{(x_1-\tau,x')}{(x_1-\tau)^N},&\mbox{ if } |x'|\le \tau \mbox{ and } \tau-|x'|\ge x_1\ge 0,\\
&\\
(0,\dots,0),&\mbox{ otherwise}.
\end{array}
\right.
\]
It is easily seen that $\mathrm{div}\, V_{a,b}=\delta_a-\delta_b$ and that $V_{a,b}$ is supported on the set
\[
D_{a,b}=\left\{(x_1,x')\in\mathbb{R}^N\, :\, \frac{|a-b|}{2}\ge |x'|+|x_1|\right\},
\] 
which is just the the union of two cones centered at $a$ and $b$ having opening $1$ and height $\tau=|a-b|/2$.
By construction we have
\[
\begin{split}
\int_{Q_{a,b}} |V_{a,b}|^p\, dx&=2\, \int_{-\tau}^0 \int_{\{x'\, :\, |x'|=x_1+\tau\} }\displaystyle \frac{\left(\sqrt{(x_1+\tau)^2+|x'|^2}\right)^p}{(x_1+\tau)^{Np}}\, dx'\, dx_1\\
&=2^\frac{p+2}{2}\,N\,\omega_N \int_{-\tau}^0 (x_1+\tau)^{-Np+p+N-1}\, dx_1\\
\end{split}
\]
so that finally
$$
\|V_{a,b}\|_{L^p}^p\leq C_{N,p}\,|a-b|^{N-p(N-1)},
$$
thanks to our assumption $p<N/(N-1)$. For some related constructions the reader is referred to
\cite[Proposition 3.2]{BCPS} and \cite[Lemma 8.3]{PR}.
\end{exa}
As a consequence of Lemma \ref{lm:cara} we have the following well-posedness result for Beckmann's problem.
\begin{prop}
\label{existbeck}
Let $1<p<\infty$. Let $\mathcal{H}:\Omega\times\mathbb{R}^N$ be a Carath\'eodory function such that $z\mapsto \mathcal{H}(x,z)$ is convex on $\mathbb{R}^N$ for every $x\in\Omega$. We further suppose that $\mathcal{H}$ satisfies the growth conditions
\begin{equation}
\label{pgrowth_vect}
\lambda(|z|^p-1)\le \mathcal{H}(x,z)\le \frac{1}{\lambda} (|z|^p+1),\qquad (x,z)\in\Omega\times\mathbb{R}^N
\end{equation}
for some $0< \lambda\le 1$. Then the following problem
\begin{equation}
\label{beckmann}
\min_{V\in L^p(\Omega;\mathbb{R}^N)}\left\{\int_\Omega \mathcal{H}(x,V)\, dx\, :\, -\mathrm{div\,}V=T,\quad V\cdot\nu_\Omega=0\right\}
\end{equation}
admits a minimizer with finite energy if and only if $T\in \dot W^{-1,p}(\Omega)$.
\end{prop}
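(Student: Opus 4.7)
The proposition is a standard Direct Method statement, and both implications reduce to the characterization in Lemma \ref{lm:cara}. I would organize the proof as follows.

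\textbf{Necessity.} If problem \eqref{beckmann} admits any admissible $V$ with $\int_\Omega \mathcal{H}(x,V)\,dx<+\infty$, then the lower bound in \eqref{pgrowth_vect} yields
\[
\lambda\int_\Omega |V|^p\,dx \le \int_\Omega \mathcal{H}(x,V)\,dx + \lambda\,|\Omega| < +\infty,
\]
so $V\in L^p(\Omega;\mathbb{R}^N)$. The constraint $-\mathrm{div}\,V=T$ with $V\cdot\nu_\Omega=0$ then gives $T\in\mathcal{E}'_{1,p}(\Omega)=\dot W^{-1,p}(\Omega)$ by Lemma \ref{lm:cara}. This direction needs no minimizer: a single finite-energy competitor suffices.

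\textbf{Sufficiency.} Assume $T\in\dot W^{-1,p}(\Omega)$. By Lemma \ref{lm:cara} the admissible set is nonempty, and the upper bound in \eqref{pgrowth_vect} shows that any $V\in L^p(\Omega;\mathbb{R}^N)$ satisfying the constraint has finite energy, bounded by $\lambda^{-1}(\|V\|_{L^p}^p+|\Omega|)$. So the infimum in \eqref{beckmann} is finite and we may run the Direct Method. Let $\{V_n\}\subset L^p(\Omega;\mathbb{R}^N)$ be a minimizing sequence. The lower bound in \eqref{pgrowth_vect} gives
\[
\lambda\int_\Omega |V_n|^p\,dx \le \int_\Omega \mathcal{H}(x,V_n)\,dx + \lambda\,|\Omega|,
\]
so $\{V_n\}$ is bounded in $L^p$. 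Up to a subsequence $V_n\rightharpoonup V_\infty$ weakly in $L^p(\Omega;\mathbb{R}^N)$.

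\textbf{Passage to the limit.} The constraint is linear: for every $\varphi\in C^1(\Omega)$,
\[
\int_\Omega \nabla\varphi\cdot V_\infty\,dx=\lim_{n\to\infty}\int_\Omega \nabla\varphi\cdot V_n\,dx=\langle T,\varphi\rangle,
\]
so $V_\infty$ is admissible. The energy is weakly lower semicontinuous on $L^p$: $\mathcal{H}$ is a Carath\'eodory integrand, convex in the second variable, with the bound $\mathcal{H}(x,z)\ge \lambda(|z|^p-1)\ge -\lambda$, so the standard Ioffe--Olech theorem (or Tonelli--Serrin, or Fatou after Mazur) applies and gives
\[
\int_\Omega \mathcal{H}(x,V_\infty)\,dx \le \liminf_{n\to\infty}\int_\Omega \mathcal{H}(x,V_n)\,dx.
\]
Hence $V_\infty$ attains the minimum, finishing the proof.

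The only non-trivial ingredient is the weak lower semicontinuity, but this is a textbook consequence of convexity in $z$ together with the Carath\'eodory and growth assumptions; both existence of an admissible vector field and coercivity come for free from Lemma \ref{lm:cara} and \eqref{pgrowth_vect}, so no real obstacle arises.
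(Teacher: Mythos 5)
Your proof is correct and follows essentially the same line as the paper's: both directions are reduced to Lemma \ref{lm:cara}, and sufficiency is handled by the Direct Method with coercivity from the lower growth bound and weak lower semicontinuity from convexity of $\mathcal{H}(x,\cdot)$. The only cosmetic difference is that you phrase necessity directly (a finite-energy competitor forces $T\in\dot W^{-1,p}(\Omega)$) while the paper phrases it contrapositively, and you name the semicontinuity theorem explicitly where the paper simply cites convexity.
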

\begin{proof}
Let $T\in \dot W^{-1,p}(\Omega)$. Thanks to Lemma \ref{lm:cara} there exists at least one admissible vector field $V_0$ with finite energy, thus the infimum \eqref{beckmann} is finite. If $\{V_n\}_{n\in\mathbb{N}}\subset L^p(\Omega;\mathbb{R}^N)$ is a minimizing sequence then the hypothesis \eqref{pgrowth_vect} on $\mathcal{H}$ guarantees that this sequence is weakly convergent to some $\widetilde V\in L^p(\Omega;\mathbb{R}^N)$. Thanks to the convexity of $\mathcal{H}$ the functional is weakly lower semicontinuous, i.e.
\[
\begin{split}
\int_\Omega \mathcal{H}(x,\widetilde V)\, dx&\le \liminf_{n\to\infty} \int_\Omega \mathcal{H}(x,V_n)\, dx\\
&=\min_{V\in L^p(\Omega;\mathbb{R}^N)}\left\{\int_\Omega \mathcal{H}(x,V)\, dx\, :\, \begin{array}{c}-\mathrm{div\,}V=T,\\ V\cdot\nu_\Omega=0\end{array}\right\}.
\end{split}
\]
Moreover the vector field $\widetilde V$ is still admissible since
\[
\int_\Omega \nabla \varphi\cdot \widetilde V\, dx=\lim_{n\to\infty}\int_\Omega \nabla \varphi\cdot V_n\, dx=\langle T,\varphi\rangle,\qquad \mbox{ for every }\varphi\in C^1({\Omega}),
\]
by weak convergence. Therefore $\widetilde V$ realizes the minimum.
\vskip.2cm\noindent
On the other hand suppose that $T\not\in \dot W^{-1,p}(\Omega)$.  Again thanks to Lemma \ref{lm:cara} we have that the set of admissible vector fields is empty so the problem is not well-posed.
\end{proof}
We need the following definition.
\begin{defi}
\label{Vacyclic}
We say that a vector field $V\in L^1(\Omega;\mathbb R^N)$ is \emph{acyclic} if whenever we can write $V=V_1+V_2$ with $|V|=|V_1|+|V_2|$ and $\mathrm{div}\, V_1=0$ with homogeneous Neumann boundary condition, namely 
\[
\int_\Omega V_1\cdot\nabla\varphi\, dx=0,\qquad \mbox{ for every }\varphi\in C^1(\Omega),
\]
there must result $V_1 \equiv 0$.
\end{defi}
The following is a mild regularity result for optimizers of \eqref{beckmann} in the {\it isotropic} case i.e. when $\mathcal{H}$ depends on the variable $z$ only through its modulus. This becomes crucial in order to equivalently reformulate \eqref{beckmann} as a Lagrangian problem, where the transport is described by measures on paths. 
\begin{prop}
\label{acyclicbeck}
Assume that $\mathcal{H}$ satisfies the hypotheses of Proposition \ref{existbeck}. In addition assume that
\[
z\mapsto \mathcal{H}(x,z)\ \mbox{ is a strictly convex increasing function of $|z|$ for every $x$}.
\]
Then there exists a unique minimizer $V$ for \eqref{beckmann} and $V$ is acyclic.
\end{prop}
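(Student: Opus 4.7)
The plan is to establish the two assertions separately: uniqueness from strict convexity, and acyclicity from strict monotonicity in $|z|$. Existence of a minimizer is already given by Proposition \ref{existbeck}.

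For \emph{uniqueness}, I first want to upgrade the hypothesis ``strictly convex increasing function of $|z|$'' to genuine strict convexity of $z\mapsto \mathcal{H}(x,z)$ on $\mathbb{R}^N$. Write $\mathcal{H}(x,z)=h(x,|z|)$ with $h(x,\cdot)$ strictly convex and increasing. A short preliminary observation is that a strictly convex increasing function on $[0,\infty)$ is automatically \emph{strictly} increasing (if $h(a)=h(b)$ for $a<b$, convexity would force $h$ constant on $[a,b]$, contradicting strict convexity). Given $z_1\neq z_2$ and $t\in(0,1)$, consider the two cases. If $|z_1|\neq |z_2|$, apply strict convexity of $h(x,\cdot)$ on the interval between $|z_1|$ and $|z_2|$, together with the triangle inequality and monotonicity. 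If instead $|z_1|=|z_2|=r$, then $|tz_1+(1-t)z_2|<r$ strictly (as the Euclidean ball is strictly convex), and strict monotonicity of $h(x,\cdot)$ yields the required strict inequality. Once strict convexity of $\mathcal{H}$ in $z$ is in hand, the classical argument concludes: if $V$ and $W$ were two distinct minimizers, then $(V+W)/2$ would be admissible (the admissibility constraint in \eqref{beckmann} is linear) and would give a strictly smaller energy on the set $\{V\neq W\}$, a contradiction.

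For \emph{acyclicity}, suppose the unique minimizer $V$ decomposes as $V=V_1+V_2$ with $|V|=|V_1|+|V_2|$ and $V_1$ divergence-free with homogeneous Neumann boundary condition in the sense of Definition \ref{Vacyclic}. Since $|V_1|\le |V|\in L^p(\Omega)$, both $V_1$ and $V_2=V-V_1$ lie in $L^p(\Omega;\mathbb{R}^N)$. Because $V_1$ is divergence-free (in the weak sense with Neumann boundary), the vector field $V_2$ still satisfies
\[
\int_\Omega \nabla\varphi\cdot V_2\,dx=\int_\Omega\nabla\varphi\cdot V\,dx-\int_\Omega \nabla\varphi\cdot V_1\,dx=\langle T,\varphi\rangle,\qquad \varphi\in C^1(\Omega),
\]
so $V_2$ is admissible for \eqref{beckmann}. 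From $|V_2|=|V|-|V_1|\le |V|$ pointwise, and from the strict monotonicity of $h(x,\cdot)$, we obtain $\mathcal{H}(x,V_2(x))\le \mathcal{H}(x,V(x))$ a.e., with strict inequality exactly on the set $\{V_1\neq 0\}$. Integrating, if $V_1$ were nonzero on a set of positive measure, then $V_2$ would have strictly smaller energy than $V$, contradicting the minimality of $V$. Hence $V_1\equiv 0$, proving acyclicity.

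I do not anticipate a serious obstacle. The only delicate point is making sure that ``strictly convex increasing function of $|z|$'' does translate into strict convexity of $\mathcal{H}$ in the vector variable $z$; that is handled by the case analysis above, which crucially uses strict convexity of the Euclidean ball in the case $|z_1|=|z_2|$. Everything else is a routine application of minimality combined with the admissibility set being an affine subspace of $L^p(\Omega;\mathbb{R}^N)$.
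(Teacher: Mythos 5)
Your proof is correct and follows essentially the same two-step strategy as the paper: uniqueness from strict convexity, and acyclicity by observing that $V_2=V-V_1$ is an admissible competitor with pointwise smaller modulus, which forces $V_1\equiv 0$. The extra care you take in showing that a strictly convex increasing radial profile $h(x,\cdot)$ actually yields genuine strict convexity of $z\mapsto\mathcal H(x,z)$ on all of $\mathbb R^N$ (via the two cases $|z_1|\neq|z_2|$ and $|z_1|=|z_2|$, the latter using strict convexity of the Euclidean ball) is a worthwhile detail that the paper leaves implicit.
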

\begin{proof}
The uniqueness of $V$ follows by strict convexity. We now prove that $V$ is acyclic. 
Suppose that we can write $V=V_1+V_2$ for some vector fields $V_1,V_2\in L^1(\Omega;\mathbb{R}^N)$ such that
\[
|V|=|V_1|+|V_2|\qquad \mbox{ and }\qquad \mathrm{div}\,V_1=0.
\] 
It follows that $\op{div}\,V=\op{div}\,V_2$ and $|V|\geq |V_2|$. Thus $V_2$ is a competitor for problem \eqref{beckmann} with energy not larger than that of $V$ thanks to the monotonicity of $\mathcal H$. Since $V$ is the unique minimizer, it must have energy equal to that of $V_2$. Thus $|V|=|V_2|$ and $|V_1|=0$ almost everywhere. This shows that $V$ is acyclic, concluding the proof.
\end{proof}

\section{Duality for Beckmann's problem}
\label{sec:dual}

We need the following general convex duality result (for the proof the reader is referred to \cite[Proposition 5, page 89]{Ek}). The statement has been slightly simplified in order to be directly adapted to our setting.
\begin{convdual}
Let $\mathcal{F}:Y\to \mathbb{R}$ be a convex lower semicontinuous functional on the reflexive Banach space $Y$. Let $X$ be another reflexive Banach space and $A:X\to Y$ a bounded linear operator, with adjoint operator $A^*:Y^*\to X^*$. 
Then we have
\begin{equation}
\label{convdual}
\sup_{x\in X}\, \langle x^*,x\rangle -\mathcal{F}(A\,x)=\inf_{y^*\in Y^*}\{\mathcal{F}^*(y^*)\, :\, A^* y^*=x^*\},\qquad x^*\in X^*,
\end{equation}
where $\mathcal{F}^*:Y^*\to\mathbb{R}\cup\{+\infty\}$ denotes the Legendre-Fenchel transform of $\mathcal{F}$. If the supremum in \eqref{convdual} is attained at some $x_0\in X$ then the infimum in \eqref{convdual} is attained as well by a $y_0^*\in Y^*$ such that
\[
y^*_0\in\partial\mathcal{F}(A\,x_0).
\]
\end{convdual}
Thanks to the above result we obtain that Beckmann's problem admits a dual formulation which is a classical elliptic problem in Calculus of Variations. 
\begin{teo}[Duality]
\label{lm:duality}
Let $1<p<\infty$ and $q=p/(p-1)$. Let $\mathcal{H}$ be a function satisfying the hypotheses of Proposition \ref{existbeck} and $T\in \dot W^{-1,p}(\Omega)$. Then
\begin{equation}
\label{duality}
\begin{split}
\min_{V\in L^p(\Omega;\mathbb{R}^N)}\left\{\int_\Omega \mathcal{H}(x,V)\, dx\, \right.&:\left.\, \begin{array}{c}-\mathrm{div\,}V=T,\\ V\cdot\nu_\Omega=0\end{array}\right\}\\
&=\max_{v\in \dot W^{1,q}(\Omega)} \left\{\langle T,v\rangle
-\int_\Omega \mathcal{H}^*(x,\nabla v)\, dx\right\},
\end{split}
\end{equation}
where $\mathcal{H}^*$ is the partial Legendre-Fenchel transform of $\mathcal{H}$, i.e.
\[
\mathcal{H}^*(x,\xi)=\sup_{z\in\mathbb{R}^N} \xi\cdot z-\mathcal{H}(x,z),\qquad x\in\Omega,\,\xi\in\mathbb{R}^N.
\]
Moreover if $V_0\in L^p(\Omega)$ and $v_0\in \dot W^{1,q}(\Omega)$ are two optimizers for the problems in \eqref{duality} then we have the following primal-dual optimality condition
\begin{equation}
\label{primaldual}
V_0\in\partial \mathcal{H}^*(x,\nabla v_0)\qquad \mbox{ in }\Omega,
\end{equation}
where $\partial\mathcal{H}^*$ denotes the subgradient with respect to the $\xi$ variable, i.e.
\[
\partial\mathcal{H}^*(x,\xi)=\{z\in\mathbb{R}^N\, :\, \mathcal{H}^*(x,\xi)+\mathcal{H}(x,z)=\xi\cdot z\},\qquad  x\in\Omega.
\]
\end{teo}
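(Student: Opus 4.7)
The plan is to apply the convex duality result stated above the theorem with the identifications $X=\dot W^{1,q}(\Omega)$ (so $X^*=\dot W^{-1,p}(\Omega)$ by Lemma~\ref{lm:cara}) and $Y=L^q(\Omega;\mathbb R^N)$ (hence $Y^*=L^p(\Omega;\mathbb R^N)$), together with the bounded linear operator $A=\nabla\colon \dot W^{1,q}(\Omega)\to L^q(\Omega;\mathbb R^N)$ and the functional
\[
\mathcal F(w)=\int_\Omega \mathcal H^*(x,w(x))\,dx, \qquad w\in L^q(\Omega;\mathbb R^N).
\]
The growth assumption \eqref{pgrowth_vect} on $\mathcal H$ transfers into a symmetric $q$-growth bound for $\mathcal H^*$, so $\mathcal F$ is finite, convex and continuous on $L^q(\Omega;\mathbb R^N)$; reflexivity of $X$ and $Y$ is clear since $1<p,q<\infty$. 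Taking $x^*=T\in\dot W^{-1,p}(\Omega)$ in the abstract identity, the left-hand side of the duality formula reads precisely $\sup_v\langle T,v\rangle-\int_\Omega\mathcal H^*(x,\nabla v)\,dx$, matching the RHS of \eqref{duality}.

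The next step is to verify that the right-hand side of the abstract identity recovers Beckmann's problem. First, the adjoint $A^*\colon L^p(\Omega;\mathbb R^N)\to\dot W^{-1,p}(\Omega)$ coincides with $V\mapsto -\mathrm{div}\,V$ interpreted with the homogeneous Neumann condition: indeed, by density of $C^1(\Omega)$ in $\dot W^{1,q}(\Omega)$ and the weak formulation of \eqref{neumann} in Lemma~\ref{lm:cara},
\[
\langle A^*V,v\rangle=\int_\Omega \nabla v\cdot V\,dx,\qquad v\in \dot W^{1,q}(\Omega),
\]
so the constraint $A^*V=T$ is exactly admissibility for the Beckmann problem. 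Second, the Legendre--Fenchel transform of $\mathcal F$ on $L^p$ is computed pointwise: since $\mathcal H^*$ is a normal convex integrand with $q$-growth, the standard interchange theorem for integral functionals (which applies because the qualification condition is trivially satisfied) gives $\mathcal F^*(V)=\int_\Omega \mathcal H^{**}(x,V)\,dx=\int_\Omega \mathcal H(x,V)\,dx$, using that $\mathcal H$ is convex and lower semicontinuous in $z$. Putting these two ingredients into the abstract duality formula proves \eqref{duality}.

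To upgrade $\inf$ and $\sup$ to $\min$ and $\max$, I would invoke direct methods. The existence of a primal minimizer is already Proposition~\ref{existbeck}. On the dual side, the coercivity estimate $\mathcal H^*(x,\xi)\geq\lambda'(|\xi|^q-1)$ combined with the Poincar\'e inequality on zero-mean functions gives weak compactness of maximizing sequences in $\dot W^{1,q}(\Omega)$, while convexity of $\xi\mapsto\mathcal H^*(x,\xi)$ and continuity of $v\mapsto\langle T,v\rangle$ yield weak upper semicontinuity of the functional. Finally, the primal-dual relation \eqref{primaldual} follows directly from the last clause of the convex duality statement: any maximizer $v_0$ produces a minimizer $V_0$ with $V_0\in\partial\mathcal F(\nabla v_0)$, and for the integral functional this global subgradient inclusion localizes to the pointwise condition $V_0(x)\in\partial\mathcal H^*(x,\nabla v_0(x))$ for a.e.\ $x\in\Omega$, via the characterization of $\mathcal F$-subgradients as almost-everywhere subgradients of a normal convex integrand.

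The main obstacle I foresee is the careful bookkeeping in identifying $A^*$ with $-\mathrm{div}$ under the Neumann condition (reconciling the distributional definition of Lemma~\ref{lm:cara} with the abstract functional-analytic adjoint) and the localization argument passing from a subgradient of the integral functional to a pointwise subgradient of the integrand; both rely essentially on the $p$-growth assumption \eqref{pgrowth_vect} to stay within the scope of classical normal integrand calculus.
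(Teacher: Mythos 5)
Your proposal follows essentially the same path as the paper's own proof: both set $X=\dot W^{1,q}(\Omega)$, $Y=L^q(\Omega;\mathbb R^N)$, $A=\nabla$, $\mathcal F(w)=\int_\Omega\mathcal H^*(x,w)\,dx$, compute $\mathcal F^*$ via $\mathcal H^{**}=\mathcal H$, identify $A^*$ with $-\mathrm{div}$ under the Neumann condition using the characterization of $\dot W^{-1,p}(\Omega)=\mathcal E'_{1,p}(\Omega)$ from Lemma~\ref{lm:cara}, and then read off the primal--dual inclusion from the last clause of the abstract duality result after noting that the dual maximum is attained by direct methods. The extra details you supply (the $q$-growth of $\mathcal H^*$, the qualification condition for the interchange of sup and integral, the localization of the subgradient of $\mathcal F$ to a pointwise subgradient of $\mathcal H^*$) are correct and fill in steps the paper leaves implicit, but they do not constitute a different argument.
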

\begin{proof}
To prove \eqref{duality} it is sufficient to apply the previous result with the choices
\[
Y=L^q(\Omega;\mathbb{R}^N),\quad X=\dot W^{1,q}(\Omega),\quad \mathcal{F}(\phi)=\int_\Omega \mathcal{H}^*(x,\phi(x))\, dx\quad \mbox{ and }\quad A\, (\varphi)=\nabla \varphi. 
\]
The operator $A$ is bounded since 
\[
\|A(\varphi)\|_Y=\|\nabla \varphi\|_{L^q(\Omega)}=\|\varphi\|_X,\qquad \mbox{for every }\varphi\in X,
\]
and
\[
\mathcal{F}^*(\xi)=\int_\Omega \mathcal{H}^{**}(x,\xi(x))\, dx=\int_\Omega \mathcal{H}(x,\xi(x))\, dx,
\]
since $\xi\mapsto \mathcal{H}(x,\xi)$ is convex and lower semicontinuous, for every $x\in\Omega$.
We only need to compute the adjoint operator $A^*:L^p(\Omega;\mathbb{R}^N)\to \dot W^{-1,p}(\Omega)$. Let us define the map $\Psi: L^p(\Omega;\mathbb{R}^N)\to \mathcal{E}'_{1,p}(\Omega)$ by
\[
\Psi(V)\in \mathcal{E}'_{1}(\Omega)\qquad \mbox{ such that } \langle \Psi(V),\varphi\rangle=\int_\Omega \nabla \varphi\cdot V\, dx,\quad \mbox{ for every }\varphi \in C^1({\Omega}).
\]
Observe that $\Psi$ is a linear operator whose image is contained in $\mathcal{E}'_{1,p}(\Omega)=\dot W^{-1,p}(\Omega)$  by construction and by the definition of $\mathcal{E}'_{1,p}(\Omega)$. Moreover for $\varphi\in C^1({\Omega})$ and $V\in L^p(\Omega;\mathbb{R}^N)$ we have
\[
\langle A\,\varphi,V\rangle=\int_\Omega \nabla \varphi\cdot V\, dx=\langle \varphi, \Psi(V)\rangle.
\]
By density of $C^1({\Omega})$ in $W^{1,q}(\Omega)$ we obtain that $\Psi=A^*$, thus \eqref{duality} follows from \eqref{convdual}.
\vskip.2cm\noindent
The primal-dual optimality condition \eqref{primaldual} is a direct consequence of the second part of the convex duality result as well. It is sufficient to observe that the maximum in \eqref{duality} is attained at some $v_0\in \dot W^{1,p}(\Omega)$ by the Direct Methods. Thus, by the above convex duality theorem, a minimizer $V_0$ of Beckmann's problem has to satisfy
\[
V_0\in\partial \mathcal{F}(\nabla v_0),
\]  
which implies directly \eqref{primaldual}.
\end{proof}
A significant instance of the previous result corresponds to $\mathcal{H}(x,z)=|z|^p$. Thanks to Lemma \ref{lm:norma} we have the following result.
\begin{coro}\label{carattdivLp}
For every $T\in \dot W^{-1,p}(\Omega)$ we have
\[
\|T\|_{\dot W^{-1,p}(\Omega)}=\min_{V\in L^p(\Omega;\mathbb{R}^N)} \Big\{\|V\|_{L^p(\Omega)}\, :\, -\mathrm{div\,}V=T,\quad V\cdot\nu_\Omega=0\Big\}.
\]
\end{coro}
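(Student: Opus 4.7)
The plan is to apply Theorem \ref{lm:duality} with the specific choice $\mathcal{H}(x,z) = \frac{1}{p}|z|^p$ and then identify the resulting dual expression with the one provided by Lemma \ref{lm:norma}. First, I would verify that this $\mathcal{H}$ satisfies the growth and convexity hypotheses of Proposition \ref{existbeck} (trivial with $\lambda \leq 1/p$), so that the minimum on the left-hand side of the corollary is attained. A direct Legendre-transform computation gives $\mathcal{H}^*(x,\xi) = \frac{1}{q}|\xi|^q$, where $q = p/(p-1)$.

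Plugging into Theorem \ref{lm:duality}, the duality identity becomes
\[
\min_V \frac{1}{p}\int_\Omega |V|^p\, dx = \max_{v \in \dot W^{1,q}(\Omega)}\left[\langle T, v\rangle - \frac{1}{q}\int_\Omega |\nabla v|^q\, dx\right],
\]
the minimum being taken over $V \in L^p(\Omega;\mathbb{R}^N)$ with $-\mathrm{div}\,V = T$ and $V\cdot\nu_\Omega = 0$. The key observation is that the right-hand side is unchanged if one replaces $\langle T, v\rangle$ by $|\langle T, v\rangle|$: the second term is even in $v$, so whenever $\langle T, v\rangle < 0$ one can replace $v$ by $-v$ to improve the first term without altering the second. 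Matching the resulting expression against Lemma \ref{lm:norma}, the right-hand side equals $\frac{1}{p}\|T\|_{\dot W^{-1,p}(\Omega)}^p$.

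Therefore $\min \int_\Omega |V|^p\, dx = \|T\|_{\dot W^{-1,p}(\Omega)}^p$, and taking $p$-th roots yields the stated identity. The argument is essentially bookkeeping of multiplicative constants; there is no real obstacle once one recognizes that Theorem \ref{lm:duality} applied to $\mathcal{H}(z) = \frac{1}{p}|z|^p$ produces precisely the functional whose supremum characterizes $\|T\|_{\dot W^{-1,p}}^p$ through Lemma \ref{lm:norma}.
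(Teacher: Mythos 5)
Your proof is correct and takes essentially the same route as the paper: apply the duality of Theorem~\ref{lm:duality} to a power-law cost, compute the Legendre transform, match against Lemma~\ref{lm:norma}, and observe that replacing $\langle T,v\rangle$ by $|\langle T,v\rangle|$ in the maximization is harmless since the $q$-term is even. Your explicit choice $\mathcal{H}(z)=\frac{1}{p}|z|^p$ makes $\mathcal{H}^*=\frac{1}{q}|\xi|^q$ line up with Lemma~\ref{lm:norma} without the extra rescaling that the paper's implicit choice $\mathcal{H}(z)=|z|^p$ would require, but this is the same argument modulo bookkeeping of constants.
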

\begin{proof}
It is sufficient to use \eqref{duality} and Lemma \ref{lm:norma} and to observe that
\[
\max_{\varphi\in \dot W^{1,q}(\Omega)} |\langle T,\varphi\rangle|-\frac{1}{q}\, \int_\Omega |\nabla \varphi|^q\, dx=\max_{\varphi\in \dot W^{1,q}(\Omega)} \langle T,\varphi\rangle-\frac{1}{q}\, \int_\Omega |\nabla \varphi|^q\, dx.
\]
This establishes the thesis.
\end{proof}
\begin{coro}
Under the hypotheses of Theorem \ref{lm:duality} we have that the functional
\[
\begin{array}{cccc}
\mathfrak{F}_\mathcal{H}:& \dot W^{-1,p}(\Omega)&\to& \mathbb{R}^+\\
& T &\mapsto & \mbox{ minimal value \eqref{beckmann}}
\end{array}
\]
is convex and weakly lower semicontinuous.
\end{coro}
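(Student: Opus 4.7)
The plan is to derive the convexity and weak lower semicontinuity directly from the dual formulation established in Theorem \ref{lm:duality}, which expresses the minimal Beckmann energy as a supremum of functionals that are affine in the datum $T$.

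More precisely, by Theorem \ref{lm:duality}, for each $T\in\dot W^{-1,p}(\Omega)$ we have
\[
\mathfrak{F}_\mathcal{H}(T)=\sup_{v\in \dot W^{1,q}(\Omega)}\left[\langle T,v\rangle-\int_\Omega \mathcal{H}^*(x,\nabla v)\,dx\right].
\]
For each fixed $v\in\dot W^{1,q}(\Omega)$ the quantity $C(v):=\int_\Omega \mathcal{H}^*(x,\nabla v)\,dx$ is a constant (independent of $T$), and $T\mapsto \langle T,v\rangle$ is a continuous linear functional on $\dot W^{-1,p}(\Omega)$ (since $v$ lies in the predual $\dot W^{1,q}(\Omega)$). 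Hence each map
\[
T\;\longmapsto\;\langle T,v\rangle-C(v)
\]
is affine and weakly continuous on $\dot W^{-1,p}(\Omega)$, and in particular convex and weakly lower semicontinuous.

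The final step is simply to recall that a pointwise supremum of convex (resp.\ weakly lower semicontinuous) functions is again convex (resp.\ weakly lower semicontinuous). Applying this to the family $\{T\mapsto \langle T,v\rangle-C(v)\}_{v\in\dot W^{1,q}(\Omega)}$ yields both properties of $\mathfrak{F}_\mathcal{H}$ at once. There is no real obstacle here: the work has already been done in proving the duality formula, and the present corollary is a standard consequence of representing a functional as a supremum of weakly continuous affine maps. The nonnegativity of $\mathfrak{F}_\mathcal{H}$ is immediate by testing the dual problem with $v\equiv 0$, so the codomain $\mathbb{R}^+$ is justified.
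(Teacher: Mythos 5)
Your proof is correct and takes essentially the same route as the paper: both use Theorem \ref{lm:duality} to write $\mathfrak{F}_\mathcal{H}$ as a pointwise supremum over $v\in\dot W^{1,q}(\Omega)$ of the affine, weakly continuous functionals $T\mapsto\langle T,v\rangle-\int_\Omega\mathcal{H}^*(x,\nabla v)\,dx$, and then invoke stability of convexity and weak lower semicontinuity under suprema. The only addition you make is the (correct) remark that testing with $v\equiv 0$ gives nonnegativity, which the paper leaves implicit.
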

\begin{proof}
It is sufficient to observe that thanks to Theorem \ref{lm:duality} the value \eqref{beckmann} can be written as a supremum of the affine continuous functionals $L_\varphi$ defined by
\[
L_\varphi(T)=\langle T,\varphi\rangle-\int_\Omega \mathcal{H}^*(x,\nabla \varphi)\, dx,\qquad \varphi\in \dot W^{1,q}(\Omega).
\]
Then the thesis follows.
\end{proof}
Some comments are in order about the duality result of Theorem \ref{lm:duality}.
\begin{oss}[Economic interpretation]
By the so-called {\it Legendre reciprocity formula} in Convex Analysis the primal-dual optimality condition \eqref{primaldual} can be equivalently written as
\begin{equation}
\label{primaldual2}
\nabla v_0\in \partial \mathcal{H}(x,V_0),\qquad \mbox{ in } \Omega,
\end{equation}
so this result is the rigorous justification of the necessary optimality conditions derived in \cite[Lemma 2]{Be}. Such $v_0$ is called a {\it Beckmann potential} and its economic interpretation is that of an efficiency price, i.e. it represents a system price for moving commodities in the most efficient regime for a transport company. It can be seen as a generalization of a Kantorovich potential to a situation where the cost to move some unit of mass from $x$ to $y$ is not fixed. Indeed it depends on the quantity of traffic generated by the transport $V_0$ itself. Heuristically observe that in this case the minimal cost is given by the ``congested metric''
\[
d_{V_0}(x_0,x_1)=\min_{\gamma\, :\, \gamma(i)=x_i} \int_{0}^1 \left|\nabla \mathcal{H}(\cdot,V_0)\circ \gamma|\,|\gamma'(t)\right|\, dt.
\]
In other words each mass particle is charged for the marginal cost it produces, the latter being the derivative of the function $\mathcal{H}$ (we suppose for simplicity that $\mathcal{H}$ possesses a true gradient and not just a subgradient). Then $v_0$ acts as a Kantorovich potential for the Optimal Transport problem
\[
\min\left\{\int_{\Omega\times\Omega} d_{V_0}(x,y)\, d\eta(x,y)\, :\, (\pi_x)_\#\eta=T_+\quad\mbox{ and }\quad (\pi_y)_\#\eta=T_-\right\},
\]
where we assume for simplicity that $T=T_+-T_-$, with $T_+$ and $T_-$ positive measures having equal total masses.
It should be remarked that $\nabla \varphi_0$ does not give the direction of optimal transportation in Beckmann's problem since $\nabla \varphi_0$ and $V_0$ are only linked through the relation \eqref{primaldual2} and they are not parallel in general. They are guaranteed to be parallel only when the cost function $\mathcal{H}$ is {\it isotropic}, i.e. when it just depends on $|V|$ for every admissible vector field $V$. This is the case studied by Beckmann in his original paper \cite{Be}.
\end{oss}
\begin{oss}[Regularity of optimal vector fields]
We point out that if $z\mapsto \mathcal H(x,z)$ is strictly convex then $\xi\mapsto \mathcal H^*(x,\xi)$ is $C^1$. In this case the optimal $V_0$ is unique and we have
\[
V_0=\nabla \mathcal{H}^*(x,\nabla v_0).
\]
Then the regularity of the optimal vector field $V_0$ can be recovered from the regularity of a Beckmann potential, which solves the following elliptic boundary value problem 
\begin{equation}
\label{EL}
\left\{\begin{array}{cccc}
-\mathrm{div\,} \nabla \mathcal{H}^*(x,\nabla u)&=&T,& \mbox{ in }\Omega\\
\nabla \mathcal{H}^*(x,\nabla u)\cdot \nu_\Omega&=&0, & \mbox{ on }\partial\Omega.
\end{array}
\right.
\end{equation}
For instance if $\mathcal{H}^*$ in uniformly convex ``at infinity'', meaning that there exist $ C_1,C_2, M>0$ such that
\[
C_1\, (1+|z|^2)^\frac{q-2}{2} \le \min_{|\xi|=1}\, \langle D^2 \mathcal{H}^*(x,z)\, \xi,\xi\rangle,\qquad \mbox{ for every } |z|\ge M,\, x\in\Omega,
\]
and such that
\[
|D^2 \mathcal{H}^*(x,z)|\le C_2\, (1+|z|^2)^\frac{q-2}{2},\qquad (x,z)\in\Omega\times\mathbb{R}^N,
\]
then $V$ is bounded provided that $T\in L^{N+\varepsilon}(\Omega)$, with $\varepsilon>0$. Indeed, in this case solutions to \eqref{EL} are Lipschitz. These assumptions are verified for example by (see \cite{BCS})
\[
\mathcal{H}^*(z)=\frac{1}{q} (|z|-\delta)^q_+,\qquad z\in\mathbb{R}^N, 
\]
where $(\cdot)_+$ stands for the positive part and where we assume $\delta\ge 0$, but they are violated by anisotropic functions of the type
\[
\mathcal{H}^*(z)=\sum_{i=1}^N \frac{1}{q}\, (|z_i|-\delta_i)^q_+,\qquad z=(z_1,\dots,z_N)\in\mathbb{R}^N,
\]
considered for example in \cite{BraC2,BraC}.
\end{oss}

\section{A Lagrangian reformulation}
\label{sec:lagrangian}

The aim of this section is to introduce a Lagrangian counterpart of Beckmann's model and to show how the two models turn out to be equivalent. The model we are going to present is a continuous version of a classical discrete model on networks by Wardrop (see \cite{Wa}). This continuous model has already been addressed in \cite{CJS} and the equivalence has been discussed in \cite{BCS}.  
We prove well-posedness of the Lagrangian problem and equivalence of the models by imposing in addition that the datum $T$ is a finite measure belonging to $\dot W^{-1,p}(\Omega)$. The proofs use Smirnov's decomposition theorem for $1-$currents (see Theorem \ref{smirnov}). 
\vskip.2cm\noindent
Given two Lipschitz curves $\gamma_1,\gamma_2:[0,1]\to\Omega$ we say that they are {\it equivalent} if there exists a continuous surjective nondecreasing function $\mathfrak{t}:[0,1]\to[0,1]$ such that
\[
\gamma_2(t)=\gamma_1(\mathfrak{t}(t)),\qquad \mbox{ for every } t\in[0,1].
\]
We call $\mathcal{L}(\Omega)$ the set of all equivalence classes of Lipschitz paths in $\Omega$. We introduce a topology on this set by defining the following distance
$$
d(\gamma_1,\gamma_2):=\max\left\{|\hat\gamma_1(t) - \hat\gamma_2(t)|:\:t\in[0,1],\:\hat\gamma_i\text{ equivalent to }\gamma_i\right\}.
$$
Observe that convergence in this metric is nothing but the usual uniform convergence, up to reparameterizations.
\vskip.2cm\noindent
We denote the class of finite positive Borel (with respect to the above topology) measures on $\mathcal{L}(\Omega)$ by $\mathscr{M}_+(\mathcal L(\Omega))$. For $Q\in\mathscr{M}_+(\mathcal L(\Omega))$. We define the corresponding {\it traffic intensity} by
\[
\langle i_Q,\varphi\rangle:=\int_{\mathcal{L}(\Omega)} \left(\int_0^1 \varphi(\gamma(t))\,|\gamma'(t)|\, dt\right)\, dQ(\gamma),\qquad \varphi\in C({\Omega}),
\]
provided that the outer integral converges, in which case we say that ``the traffic intensity $i_Q$ exists''. If this is the case then the following integral also converges:
\[
\langle \mathbf{i}_{Q},\varphi\rangle=\int_{\mathcal{L}(\Omega)} \left(\int_0^1 \varphi(\gamma(t))\cdot \gamma'(t)\, dt\right)\, dQ(\gamma),\qquad \varphi\in C({\Omega};\mathbb{R}^N).
\]
These definitions do not depend on the particular representative of the equivalence class we chosen, since the integrals in brackets are invariant under time reparameterization.
\begin{oss}
Observe that $i_Q$ counts in a scalar way the traffic generated by $Q$ while $\mathbf{i}_Q$ computes it in a vectorial way. This means that in principle $i_Q$ and $|\mathbf{i}_Q|$ could be very different: in $\mathbf{i}_Q$ two huge amounts of mass going in opposite direction give rise to a lot of cancellations, as the orientation of curves is taken into account. As a simple example suppose to have two distinct points $x_0\not= x_1$ and consider the measure
\[
Q=\frac{1}{2}\,\delta_{\gamma_1}+\frac{1}{2}\,\delta_{\gamma_2},
\]
with $\gamma_1(t)=(1-t)\,x_0+t\,x_1$ and $\gamma_2(t)=(1-t)\,x_1+t\, x_0$. By computing the traffic intensity we obtain
\[
i_Q= \mathscr{H}^1\, \llcorner\, \overline{x_0 x_1},
\] 
which takes into account the intuitive fact that on the segment $\overline{x_0 x_1}$ globally there is a non negligible amount of transiting mass. On the other hand it is easily seen that 
\[
\mathbf{i}_Q\equiv 0.
\] 
\end{oss}
\noindent
Given a Radon measure $T$ on $\Omega$ we define the following space
$$
 \mathcal{Q}_p(T):=\Big\{Q\in\mathscr{M}_+(\mathcal L(\Omega)):\: i_Q\in L^p(\Omega)\ \text{ and }\ ({e_1-e_0})_\# Q=T\Big\},
$$
where $e_i:\mathcal{L}(\Omega)\to\Omega$ is defined by $e_i(\gamma)=\gamma(i)$, for $i=0,1$.
\noindent
Now consider a Carath\'eodory function $\mathcal H:\Omega\times\mathbb{R}^+\to\mathbb{R}^+$ such that
\begin{equation}
\label{pgrowth}
\lambda\, (t^p-1)\le\mathcal H(x,t)\le \frac{1}{\lambda}\, (t^p+1),\qquad x\in\Omega,\, t\in\mathbb{R}^+
\end{equation}
for some $0< \lambda\le 1$ and such that
\[
t\mapsto \mathcal H(x,t)\qquad \mbox{ is convex, for every }  x\in\Omega.
\]
If $\mathcal Q_p(T)\neq \emptyset$ then we define the following minimization problem:
\begin{equation}
\label{wardrop_}
\inf_{Q\in\mathcal Q_p(T)}\int_{\Omega}\mathcal H(x,i_Q(x))\, dx.
\end{equation}
\begin{oss}
Similar Lagrangian formulations have been studied in connection with transport problems involving {\it concave costs}, e.g. problems where to move a mass $m$ of a length $\ell$ costs $m^\alpha\, \ell$ ($0<\alpha<1$). For these the reader is referred to the monograph \cite{BCM}, as well as to the papers \cite{PS3,Va}.
\end{oss}
We show that problem \eqref{wardrop_} is well-posed and equivalent to the one in \eqref{beckmann}.
To this aim we use the following result, enunciated in the appendix (see Theorem \ref{smirnov}) in a slightly stronger formulation in terms of currents.
\begin{prop}
\label{smirnovlp}
Let $1\le p\le \infty$. Assume that $V\in L^p(\Omega, \mathbb R^N)$ and that it is acyclic. Let  $T=-\mathrm{div}\, V$ be a Radon measure on $\Omega$. It is then possible to find $Q\in\mathscr{M}_+(\mathcal{L}(\Omega))$ such that
\[
(e_0)_\# Q=T_-\qquad \mbox{ and }\qquad (e_1)_\# Q=T_+.
\] 
Moreover we have
\[ 
\mathbf i_Q=V \qquad\mbox{ and }\qquad i_Q=|V|.
\]
In particular $Q\in\mathcal{Q}_p(T)$.
\end{prop}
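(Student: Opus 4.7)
The plan is to transfer the statement to the language of normal $1$-currents and then apply Smirnov's decomposition theorem (Theorem \ref{smirnov}) in its stronger current-theoretic form. First I would associate to $V$ the $1$-current $T_V$ defined on compactly supported smooth $1$-forms $\omega = \sum_i \omega_i\, dx_i$ by
\[
T_V(\omega) = \int_\Omega \langle \omega(x), V(x)\rangle\, dx.
\]
Its mass is $\mathbf{M}(T_V) = \|V\|_{L^1(\Omega)} < \infty$, and its boundary satisfies $\partial T_V(\varphi) = \int_\Omega \nabla\varphi \cdot V\, dx = \langle T, \varphi\rangle$; since $T$ is a Radon measure, $\partial T_V$ has finite mass as well, so $T_V$ is a normal $1$-current with $\partial T_V = T_+ - T_-$.

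Second, I would verify that $T_V$ is acyclic as a current (in the geometric sense required by the appendix). The hypothesis in Definition \ref{Vacyclic} is tailored to this: any decomposition $T_V = T_1 + T_2$ with $\mathbf{M}(T_V) = \mathbf{M}(T_1) + \mathbf{M}(T_2)$ and $\partial T_1 = 0$ corresponds precisely to a splitting $V = V_1 + V_2$ with $|V| = |V_1| + |V_2|$ and $\mathrm{div}\, V_1 = 0$ (with the natural no-flux condition at $\partial\Omega$, which is automatic once we work with currents compactly supported in $\Omega$). Acyclicity of $V$ then forces $V_1 = 0$, hence $T_1 = 0$.

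Third, I would apply Smirnov's theorem to write
\[
T_V = \int_{\mathcal L(\Omega)} [[\gamma]]\, dQ(\gamma), \qquad \mathbf{M}(T_V) = \int_{\mathcal L(\Omega)} \mathbf{M}([[\gamma]])\, dQ(\gamma), \qquad \mathbf{M}(\partial T_V) = \int_{\mathcal L(\Omega)} \mathbf{M}(\partial [[\gamma]])\, dQ(\gamma),
\]
where $[[\gamma]]$ is the integration current along the Lipschitz curve $\gamma$. Testing the first identity against $\varphi\, dx_i$ yields, for every $\varphi\in C(\Omega;\mathbb{R}^N)$,
\[
\int_\Omega \varphi \cdot V\, dx = \int_{\mathcal L(\Omega)} \left(\int_0^1 \varphi(\gamma(t))\cdot \gamma'(t)\, dt\right) dQ(\gamma) = \langle \mathbf{i}_Q, \varphi\rangle,
\]
so $\mathbf{i}_Q = V$. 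Likewise the mass identity $\mathbf{M}(T_V) = \int\int_0^1|\gamma'(t)|\,dt\,dQ$ and the definition of $i_Q$ combine with the matching at the level of total variations in Smirnov's theorem to give $i_Q = |V|$ as measures. Finally, the ``no-cancellation'' boundary statement $\mathbf{M}(\partial T_V) = \int \mathbf{M}(\partial[[\gamma]])\, dQ$ together with $\partial[[\gamma]] = \delta_{\gamma(1)} - \delta_{\gamma(0)}$ and $\partial T_V = T_+ - T_-$ implies that the positive and negative parts of $\partial T_V$ come entirely from the endpoints and starting points respectively, yielding $(e_1)_\# Q = T_+$ and $(e_0)_\# Q = T_-$. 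Since $i_Q = |V| \in L^p(\Omega)$, the membership $Q \in \mathcal{Q}_p(T)$ follows.

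The main obstacle, and the reason the appendix is needed, is the bridge between the vector-field formulation of acyclicity (Definition \ref{Vacyclic}) and the current-theoretic one used in Smirnov's theorem, together with the fact that the no-cancellation identity for the boundary is exactly what is required to separate the pushforwards $(e_0)_\# Q$ and $(e_1)_\# Q$ into $T_-$ and $T_+$ without residual cancellation. Once this dictionary is set up cleanly, the proof is essentially a test-form-by-test-form unwrapping of Smirnov's decomposition.
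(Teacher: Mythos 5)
Your proposal is correct and follows essentially the same route as the paper: pass to the current $I_V$, use the equivalence \eqref{aciclico} between acyclicity of $V$ and of $I_V$, apply Smirnov's Theorem~\ref{smirnov}, and translate the decomposition back to the vector-field/path-measure language, which is precisely what the appendix does in Corollary~\ref{coro:smirnov}. The only small imprecision is in the derivation of $i_Q=|V|$: the scalar mass identity $\mathbb M(I_V)=\int\mathbb M([\gamma])\,dQ$ alone is not enough, since $\mathbb M([\gamma])\le\ell(\gamma)$ in general; one needs the measure-level identity $\mu_{I_V}=\int\mu_{[\gamma]}\,dQ$ together with the fact that $Q$ is concentrated on arcs $\widetilde{\mathcal L}(\Omega)$, for which $\mu_{[\gamma]}=\mathscr H^1\llcorner\mathrm{Im}(\gamma)$ and $\mathbb M([\gamma])=\ell(\gamma)$, which you allude to but should state explicitly.
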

Thanks to the above result we can prove the following.
\begin{prop}
Let $T$ be a Radon measure on $\Omega$. The set \( \mathcal{Q}_p(T)\) is not empty if and only if $T\in \dot W^{-1,p}(\Omega)$. 
\end{prop}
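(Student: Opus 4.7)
The plan is to prove both implications using Proposition \ref{smirnovlp} for one direction and a direct divergence computation for the other, with Lemma \ref{lm:cara} serving as the bridge to the space $\dot W^{-1,p}(\Omega)$.

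For the direction $\mathcal{Q}_p(T)\neq\emptyset \Rightarrow T\in\dot W^{-1,p}(\Omega)$, I would take a $Q\in\mathcal{Q}_p(T)$ and use its vector traffic intensity $\mathbf{i}_Q$ as a candidate for Lemma \ref{lm:cara}. The key pointwise bound $|\mathbf{i}_Q|\leq i_Q$ (which follows from $|\gamma'(t)|\geq |\gamma'(t)\cdot e|$ for any unit vector $e$) combined with the assumption $i_Q\in L^p(\Omega)$ gives $\mathbf{i}_Q\in L^p(\Omega;\mathbb{R}^N)$. Then, for any $\varphi\in C^1(\Omega)$, one computes by Fubini
\[
\int_\Omega \nabla\varphi\cdot \mathbf{i}_Q\, dx=\int_{\mathcal{L}(\Omega)}\int_0^1 \nabla\varphi(\gamma(t))\cdot\gamma'(t)\, dt\, dQ(\gamma)=\int_{\mathcal{L}(\Omega)}\bigl[\varphi(\gamma(1))-\varphi(\gamma(0))\bigr]\, dQ(\gamma),
\]
which equals $\langle (e_1-e_0)_\#Q,\varphi\rangle=\langle T,\varphi\rangle$. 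Hence $-\mathrm{div}\,\mathbf{i}_Q=T$ with the homogeneous Neumann condition $\mathbf{i}_Q\cdot\nu_\Omega=0$ (encoded by the test against all $\varphi\in C^1(\overline\Omega)$, not only the compactly supported ones), so Lemma \ref{lm:cara} yields $T\in\dot W^{-1,p}(\Omega)$.

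For the converse, suppose $T$ is a Radon measure lying in $\dot W^{-1,p}(\Omega)$. I would apply Proposition \ref{existbeck} with the isotropic and strictly convex cost $\mathcal{H}(x,z)=|z|^p$, which belongs to the class covered by Proposition \ref{acyclicbeck}. This produces a unique minimizer $V\in L^p(\Omega;\mathbb{R}^N)$ of Beckmann's problem with datum $T$, and this minimizer is acyclic. Since by construction $-\mathrm{div}\,V=T$ is a Radon measure, Proposition \ref{smirnovlp} provides a measure $Q\in\mathscr{M}_+(\mathcal{L}(\Omega))$ with $(e_0)_\#Q=T_-$, $(e_1)_\#Q=T_+$, and $i_Q=|V|\in L^p(\Omega)$. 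Taking the difference yields $(e_1-e_0)_\#Q=T_+-T_-=T$, hence $Q\in\mathcal{Q}_p(T)$.

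I do not anticipate a genuine obstacle: all heavy lifting is done by Proposition \ref{smirnovlp} (which rests on Smirnov's decomposition) and by the well-posedness of Beckmann's problem. The most delicate point is conceptual rather than technical, namely recognizing that the acyclicity supplied by Proposition \ref{acyclicbeck} is exactly what triggers Proposition \ref{smirnovlp}, and that one must choose the isotropic strictly convex integrand $|z|^p$ to invoke it. Beyond that, the verification $|\mathbf{i}_Q|\leq i_Q$ and the Fubini calculation for the divergence are routine and do not require additional hypotheses beyond what is assumed.
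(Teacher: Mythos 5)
Your proposal is correct and follows essentially the same route as the paper: the forward direction uses $|\mathbf{i}_Q|\le i_Q$ together with the Fubini computation of $-\mathrm{div}\,\mathbf{i}_Q$ to invoke Lemma \ref{lm:cara} (the paper phrases this as a contradiction argument, but the mathematics is identical), and the converse direction is exactly the paper's: take the unique minimizer of Beckmann's problem for $\mathcal{H}(x,z)=|z|^p$, note it is acyclic by Proposition \ref{acyclicbeck}, and apply Proposition \ref{smirnovlp}. No gaps.
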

\begin{proof}
Let us suppose that $T\not\in \dot W^{-1,p}(\Omega)$ and assume by contradiction that there exists $Q_0\in\mathcal{Q}_p(T)$. In particular
\begin{equation}
\label{zero}
\int_\Omega |i_{Q_0}|^p\, dx<+\infty.
\end{equation}
The vector measure $\mathbf{i}_{Q_0}$ satisfies \eqref{neumann}, since
\[
\begin{split}
\int_\Omega \nabla\varphi\cdot d\mathbf{i}_{Q_0} &=\int_{\mathcal{L}(\Omega)} \left(\int_0^1 \nabla \varphi(\gamma(t))\cdot \gamma'(t)\, dt\right)\, dQ_0(\gamma)\\
&=\int_{\mathcal{L}(\Omega)} \Big[\varphi(\gamma(1))-\varphi(\gamma(0))\Big]\, dQ_0(\gamma)=\langle T,\varphi\rangle,
\end{split}
\]
for every $\varphi\in C^1(\Omega)$.
Thanks to the fact that $|\mathbf{i}_{Q_0}|\le i_{Q_0}$ and to \eqref{zero} we have that $\mathbf{i}_{Q_0}\in L^p(\Omega;\mathbb{R}^N)$. This contradicts the fact that $T\not\in \dot W^{-1,p}(\Omega)$, as desired.
\vskip.2cm\noindent
Now take $T\in \dot W^{-1,p}(\Omega)$. Then there exists a minimizer $V$ of problem \eqref{beckmann} with $\mathcal{H}(x,z)=|z|^p$.
Thanks to Proposition \ref{acyclicbeck} we know that $V$ is acyclic. Since $T$ is a Radon measure we can apply Proposition \ref{smirnovlp} and we infer the existence of $Q\in\mathcal{Q}_p(T)$. This gives directly the thesis.
\end{proof}
We now prove our equivalence statement, which is the main result of this section. Observe that we prove at the same time existence of a minimizer for \eqref{wardrop_}.
\begin{teo}
\label{th:wardbeck}
Let $\mathcal H:\Omega\times\mathbb{R}^+\to\mathbb{R}^+$ be a Carath\'eodory function satisfying \eqref{pgrowth} and such that
\[
t\mapsto \mathcal H(x,t)\ \mbox{ is strictly convex and increasing},\qquad x\in\Omega.
\]  
If $T$ is a Radon measure belonging to $\dot W^{-1,p}(\Omega)$ then we have
\begin{equation}
\label{uguali}
\inf_{Q\in\mathcal Q_p(T)} \int_{\Omega}\mathcal H(x,i_Q)\,dx=\min_{V\in L^p(\Omega;\mathbb{R}^N)}\left\{\int_\Omega \mathcal H(x,|V|)\, dx\, :\, \begin{array}{c}-\mathrm{div\,}V=T,\\ V\cdot\nu_\Omega=0\end{array}\right\}
\end{equation}
and the infimum on the left-hand side is achieved. 
\par
Moreover, if $Q_0\in \mathcal Q_p(T)$ is optimal then $\mathbf{i}_{Q_0}\in L^p(\Omega;\mathbb{R}^N)$ is a minimizer of Beckmann's problem. Conversely, if $V_0$ is optimal then there exists $Q_{V_0}\in\mathcal{Q}_p(T)$ such that $i_{Q_{V_0}}=|\mathbf{i}_{Q_{V_0}}|$ minimizes the Lagrangian problem.
\end{teo}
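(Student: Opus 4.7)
The identity \eqref{uguali} will be proved by a double inequality, using the two sides of the correspondence $Q\leftrightarrow \mathbf i_Q$ and $V\leftrightarrow Q_V$. For the direction ``$\mathrm{inf}\geq\mathrm{min}$'' I pass from a Lagrangian competitor to a vector-field competitor via $V:=\mathbf i_Q$; for ``$\mathrm{min}\geq\mathrm{inf}$'' I go the other way using Smirnov's decomposition (Proposition \ref{smirnovlp}), which is the place where the assumption that $T$ is a Radon measure (and not merely an element of $\dot W^{-1,p}(\Omega)$) gets consumed. The parallel correspondence at the level of optimizers then falls out essentially for free.

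\textbf{The inequality $\geq$.} Fix $Q\in\mathcal Q_p(T)$. By definition of $\mathbf i_Q$ and Fubini,
\[
\int_\Omega \nabla\varphi\cdot d\mathbf i_Q=\int_{\mathcal L(\Omega)}\bigl[\varphi(\gamma(1))-\varphi(\gamma(0))\bigr]dQ(\gamma)=\langle (e_1-e_0)_\# Q,\varphi\rangle=\langle T,\varphi\rangle
\]
for every $\varphi\in C^1(\Omega)$, so $\mathbf i_Q$ is a candidate in the Beckmann problem (the full class $C^1(\Omega)$ of test functions encodes the homogeneous Neumann condition). The pointwise estimate $|\mathbf i_Q|\leq i_Q$ combined with $i_Q\in L^p(\Omega)$ gives $\mathbf i_Q\in L^p(\Omega;\mathbb R^N)$ (via Radon--Nikodym w.r.t.\ $i_Q$), and by monotonicity of $t\mapsto\mathcal H(x,t)$,
\[
\int_\Omega \mathcal H(x,|\mathbf i_Q|)\,dx\leq\int_\Omega \mathcal H(x,i_Q)\,dx.
\]
Taking the minimum over admissible $V$ on the left and the infimum over $Q\in\mathcal Q_p(T)$ on the right yields $\min_V\leq \inf_Q$.

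\textbf{The inequality $\leq$ and existence in the Lagrangian problem.} Let $V_0$ be \emph{the} minimizer of Beckmann's problem, which is unique and acyclic by Proposition \ref{acyclicbeck}. Since $T=-\mathrm{div}\,V_0$ is a Radon measure by hypothesis, Proposition \ref{smirnovlp} produces $Q_{V_0}\in\mathscr M_+(\mathcal L(\Omega))$ with $\mathbf i_{Q_{V_0}}=V_0$, $i_{Q_{V_0}}=|V_0|$, and $(e_0)_\# Q_{V_0}=T_-$, $(e_1)_\# Q_{V_0}=T_+$. In particular $(e_1-e_0)_\# Q_{V_0}=T$ and $i_{Q_{V_0}}=|V_0|\in L^p(\Omega)$, so $Q_{V_0}\in\mathcal Q_p(T)$. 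Then
\[
\int_\Omega \mathcal H(x,i_{Q_{V_0}})\,dx=\int_\Omega \mathcal H(x,|V_0|)\,dx=\min_V\,,
\]
which simultaneously proves $\inf_Q\leq\min_V$ and shows that the infimum on the left of \eqref{uguali} is attained, namely by $Q_{V_0}$.

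\textbf{Correspondence of optimizers.} For an arbitrary optimal $Q_0\in\mathcal Q_p(T)$ the chain of inequalities in the first step becomes saturated:
\[
\min_V\leq\int_\Omega \mathcal H(x,|\mathbf i_{Q_0}|)\,dx\leq\int_\Omega\mathcal H(x,i_{Q_0})\,dx=\min_V,
\]
so $\mathbf i_{Q_0}$ is a Beckmann minimizer (in fact equal to $V_0$ by strict convexity, and one also deduces $|\mathbf i_{Q_0}|=i_{Q_0}$ a.e.\ by strict monotonicity). Conversely, for optimal $V_0$ the $Q_{V_0}$ constructed above satisfies $i_{Q_{V_0}}=|V_0|=|\mathbf i_{Q_{V_0}}|$ by construction, proving (ii). The substantive content of the argument is concentrated in Proposition \ref{smirnovlp}: once Smirnov's decomposition is granted, the remainder of the proof is bookkeeping, and the only delicate check is that $\mathbf i_Q$ is genuinely an $L^p$ vector field and not merely a finite vector measure, which follows from $|\mathbf i_Q|\leq i_Q\in L^p$.
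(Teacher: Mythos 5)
Your proof is correct and follows essentially the same route as the paper: the inequality $\min_V\leq\inf_Q$ from $|\mathbf i_Q|\leq i_Q$ plus monotonicity of $\mathcal H$, and the reverse inequality (with attainment) from acyclicity of the unique Beckmann minimizer (Proposition \ref{acyclicbeck}) combined with Smirnov's decomposition (Proposition \ref{smirnovlp}). You spell out the saturation argument for the correspondence of optimizers, which the paper leaves as ``an easy consequence of the previous constructions,'' but the logic is identical.
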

\begin{proof}
By the previous result the set $\mathcal{Q}_p(T)$ is not empty. For every admissible $Q$ we have $|\mathbf{i}_Q|\le i_Q$, therefore $\mathbf{i}_Q$ is admissible for Beckmann's problem. Using the monotonicity of $\mathcal H(x,\cdot)$ we then obtain
\[
\min_{V\in L^p(\Omega;\mathbb{R}^N)}\left\{\int_\Omega \mathcal H(x,|V|)\, dx\, :\, \begin{array}{c}-\mathrm{div\,}V=T,\\ V\cdot\nu_\Omega=0\end{array}\right\}\le \inf_{Q\in\mathcal Q_p(T)} \int_{\Omega}\mathcal H(x,i_Q(x))\,dx<+\infty.
\]
Now let $V_0\in L^p(\Omega;\mathbb{R}^N)$ be a minimizer for Beckmann's problem. By Proposition \ref{acyclicbeck} $V_0$ is acyclic. Thus by Proposition \ref{smirnovlp} there exists $Q_0\in\mathcal{Q}_p(T)$ such that $|V_0|=i_{Q_0}$, i.e.
\[
\min_{V\in L^p(\Omega;\mathbb{R}^N)}\left\{\int_\Omega \mathcal H(x,|V|)\, dx\, :\, \begin{array}{c}-\mathrm{div\,}V=T,\\ V\cdot\nu_\Omega=0\end{array}\right\}=\int_{\Omega}\mathcal H(x,i_{Q_0}(x))\,dx.
\]
This shows that \eqref{uguali} holds true and that the infimum in the left-hand side is indeed a minimum.
\par
The relation between minimizers of the two problems is an easy consequence of the previous constructions.
\end{proof}
\appendix
\section{Decompositions of acyclic $1-$currents}
\label{currents}
\subsection{Definitions and links to vector fields}
The classical references which we use for currents are \cite{F,GMS}. We translate however all results in the language of the previous sections. In what follows, by $\Omega\subset\mathbb{R}^N$ we still denote the closure of an open bounded connected set having smooth boundary.
\begin{defi}
A \emph{$0-$current} on $\Omega$ is a distribution on $\Omega$ in the usual sense.
A \emph{$1-$current} on $\Omega$ is a vector valued distribution on $\Omega$. The relevant duality is the one with $1$-forms $\omega(x)=\sum_{i=1}^N\omega_i(x)\,dx^i$ having smooth coefficients, i.e. $\omega_i\in C^\infty(\Omega)$. We denote by $C^\infty (\Omega,\wedge^1\mathbb R^N)$ the space of such forms. More generally a $k-$current is an element in the dual of smooth $k-$forms $C^\infty(\Omega,\wedge^k\mathbb R^N)$.
\end{defi}
The above definition automatically gives the space of currents a natural weak topology, defined via the duality with smooth forms. 
For any current there is a natural definition of boundary. 
\begin{defi}
 If $I$ is a $k-$current on $\Omega$ then we can define its {\it boundary} $\partial I$ to be the $(k-1)$-current on $\Omega$ which satisfies
$$
\langle\partial I,\varphi\rangle=-\langle I,d\varphi\rangle,\qquad \mbox{ for all }\varphi\in C^\infty(\Omega,\wedge^{k-1}\mathbb R^N),
$$
where $d$ is the exterior derivative. For example if $k=1$ we must take $\varphi\in C^\infty(\Omega)$ and $d\varphi$ is the $1$-form $\sum_i \partial_{x_i}\varphi\, dx_i$.
\end{defi}
\begin{defi}
Let $I$ be a $k-$current. The \emph{mass} of $I$ is defined as
\[
\mathbb{M}(I)=\sup\left\{|\langle I,\omega\rangle|\, :\, \omega\in C^\infty(\Omega,\wedge^k\mathbb R^N),\:\sup_{x\in\Omega} \|\omega(x)\|\le 1\right\},
\]
where the norm $\|\omega\|$ for an alternating $k$-tensor $\omega$ is defined as 
$$
\|\omega\|=\sup\{\langle\omega,\mathbf{e}\rangle:\:\mathbf{e}\text{ unit simple }k-\text{vector}\}.
$$
For $k=1$ this coincides with the usual norm $\|\omega\|=\sqrt{\omega_1^2 +\ldots +\omega_N^2}$.
\end{defi}
We use just $1-$currents which are distributions of order $0$ (i.e. vector valued Radon measures) and in their boundaries (which are scalar distributions of order $1$). On this point a comment is in order.
\begin{oss}
\label{distrrepint}
Finite mass $1-$currents can be identified with vector-valued Radon measures as follows. To every smooth $1-$form $\omega$ we may associate naturally a vector field $X_\omega:=(\omega_1,\ldots,\omega_N)$. We can then write for a $1-$current $I$
$$
\int X_\omega\, dI:=\langle I,\omega\rangle.
$$
Since $C^\infty$ is dense in $C^0$ the resulting linear functional on smooth vector fields can be identified via Hahn-Banach theorem to a unique linear functional on $C^0$ vector fields. The latter is indeed a vector-valued Radon measure by Riesz representation theorem. Since $\|\omega(x)\|=\|X_\omega(x)\|$ by the above definition, we automatically obtain that 
$$
\mathbb M(I)=\int_\Omega d|I|,
$$
i.e. the mass equals the total variation of $I$ regarded as a Radon measure. The same reasoning can be applied to $0-$currents of finite mass, by identifying them with scalar Radon measures\footnote{See also ``{\it Distributions representable by integration}'' in \cite[4.1.7]{F}}.
\end{oss}
\begin{defi}[variation of a current]
Let $A$ be a $k-$current with $k\in\{0,1\}$. Then we may define the variation measure $\mu_A$ of $A$ in the usual sense by identifying $A$ with a Radon measure as in Remark \ref{distrrepint}. Thus for a Borel set $E$ we define
$$
\mu_A(E):=\sup\left\{\sum_{i=1}^k\left|\int_{E_i}dA\right|:E_i\text{ form a Borel partition of } E\right\}.
$$
\end{defi}
An equivalent way of defining $\mu_A$ would be as the infimum of all measures $\mu$ such that $\langle A,\omega\rangle\leq\int_\Omega \|\omega\|d\mu$ for all smooth $1$-forms $\omega$.
\vskip.3cm\noindent
We recall that a $k-$current $T$ is said to be {\it normal} if\,\footnote{For $k=0$ we define $\partial A=0$ and thus the condition on $\partial A$ can be omitted.}
\[
\mathbb{M}(T)+\mathbb{M}(\partial T)<+\infty. 
\]
We now define {\it flat currents}, a class useful for its closure properties.
\begin{defi}
 We define the \emph{flat norm} of a $k-$current $A$ as follows
\[
\mathbb{F}(A)=\inf\left\{ \mathbb{M}(A-\partial I)+\mathbb{M}(I)\, :\, I \mbox{ is a $(k+1)-$current with } \mathbb{M}(I)<\infty\right\}.
\]
Then the space of \emph{flat $k-$currents} is defined as the completion of normal $k-$currents in the flat norm.
\end{defi}
Flat currents of finite mass have the following characterization, which will be exploited in the sequel.
\begin{lm}
\label{lm:bemolle}
Let $T$ be a $k-$current of finite mass. Then $T$ is flat if and only if there exists a sequence of normal $k-$currents $\{T_n\}_{n\in\mathbb{N}}$ such that
\[
\lim_{n\to\infty} \mathbb{M}(T_n-T)=0.
\]
\end{lm}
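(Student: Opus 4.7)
\medskip
\noindent\textbf{Proof plan.} Both directions revolve around comparing the two seminorms $\mathbb{M}$ and $\mathbb{F}$ on currents. The inequality $\mathbb{F}(A)\leq \mathbb{M}(A)$ (attained in the defining infimum by the choice $I=0$) is the quantitative link we use throughout.

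\smallskip
\noindent The implication $(\Leftarrow)$ is immediate. If $T_n$ are normal with $\mathbb{M}(T_n-T)\to 0$, then by the above inequality $\mathbb{F}(T_n-T)\to 0$, so $T$ is a flat-norm limit of normal currents and hence flat by definition.

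\smallskip
\noindent The nontrivial direction is $(\Rightarrow)$, where I would upgrade a flat approximation into a mass approximation by absorbing the ``bad'' piece into the approximating current. Since $T$ is flat there exist normal currents $S_n$ with $\mathbb{F}(S_n-T)\to 0$. Unrolling the definition of $\mathbb{F}$, I can pick $(k{+}1)$-currents $I_n$ of finite mass such that
\[
\mathbb{M}\bigl(S_n-T-\partial I_n\bigr)+\mathbb{M}(I_n)\le \mathbb{F}(S_n-T)+\tfrac{1}{n}\xrightarrow[n\to\infty]{} 0.
\]
Now set $T_n:=S_n-\partial I_n$. Then $T_n-T=S_n-T-\partial I_n$, so $\mathbb{M}(T_n-T)\to 0$, which is exactly the desired conclusion once I verify that each $T_n$ is itself a normal $k$-current.

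\smallskip
\noindent The main point, and the one place where I expect some care is needed, is precisely the normality of $T_n$. The mass is not an issue: $\mathbb{M}(T_n)\le \mathbb{M}(T)+\mathbb{M}(T_n-T)$, and the right-hand side is uniformly bounded since $T$ has finite mass by hypothesis and $\mathbb{M}(T_n-T)\to 0$. The crucial observation is that the boundary of $T_n$ simplifies dramatically:
\[
\partial T_n = \partial S_n - \partial\partial I_n = \partial S_n,
\]
so $\mathbb{M}(\partial T_n)=\mathbb{M}(\partial S_n)<\infty$ since $S_n$ was normal to start with. This is the clever step: by subtracting off $\partial I_n$ from $S_n$ we sacrifice nothing on the boundary side (it vanishes by $\partial^2=0$), while eliminating exactly the term responsible for the gap between flat and mass convergence. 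Once normality of $T_n$ is established, the sequence witnesses the desired characterization. No further ingredients (like mollification, Smirnov decomposition, or a specific structure theorem) are needed; the argument is purely formal once the flat norm's definition is unpacked.
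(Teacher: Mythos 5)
Your proof is correct and follows essentially the same route as the paper: unpack the definition of $\mathbb{F}$, set $T_n$ equal to the normal approximant with the $\partial I_n$ term absorbed, observe that $\partial^2=0$ preserves boundary normality, and use $\mathbb{F}\le\mathbb{M}$ for the easy direction. Your mass bound $\mathbb{M}(T_n)\le\mathbb{M}(T)+\mathbb{M}(T_n-T)$ is in fact a slightly cleaner version of the paper's triangle-inequality estimate.
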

\begin{proof}
This is a standard fact but we provide a proof for the sake of completeness. By definition of flat convergence there exists a sequence $\{I_n\}_{n\in\mathbb{N}}$ of normal $k-$currents and a sequence $\{Y_n\}_{n\in\mathbb{N}}$ of $(k+1)-$currents such that
\[
\lim_{n\to\infty} \left[\mathbb{M}(T-I_n-\partial Y_n)+\mathbb{M}(Y_n)-\frac{1}{n}\right]\le \lim_{n\to\infty}\mathbb{F}(T-I_n)=0.
\]
Then we set $T_n=I_n+\partial Y_n$, which by construction is a $k-$current and 
\[
\lim_{n\to\infty} \mathbb{M}(T-T_n)=0,
\] 
thanks to the previous estimate.
To prove that $T_n$ is a normal current we first write
\[
\mathbb{M}(T_n)\le\mathbb{M}(I_n)+\mathbb{M}(\partial Y_n)\le 2\,\mathbb{M}(I_n)+\mathbb{M}(T-I_n-\partial Y_n)+\mathbb{M}(T)<+\infty.
\]
Observe that we used that $T$ has finite mass and the triangular inequality. Secondly we note that
\[
\mathbb{M}(\partial T_n)=\mathbb{M}(\partial I_n)<+\infty,
\]
since $\partial(\partial Y_n)=0$. 
\vskip.2cm\noindent
The converse implication is simpler: by definition of flat norm we have
\[
\mathbb{F}(T-T_n)\le\mathbb{M}(T-T_n).
\]
This concludes the proof.
\end{proof}
A significant instance of flat $1-$currents with finite mass is given by $L^1$ vector fields.
\begin{lm}
\label{lm:flatL1}
Given $V\in L^1(\Omega;\mathbb{R}^N)$ we naturally associate to it the $1-$current $I_V$ of finite mass defined by
\[
\langle I_V,\omega\rangle=\sum_{i=1}^N \int_\Omega \omega_i\, V_i\, dx:=\int_\Omega\omega(V) \qquad \mbox{ for every }\ \omega\in C^\infty(\Omega,\wedge^1 \mathbb{R}^N).
\]
This current has compact support contained in $\Omega$ and $\mathbb{M}(I_V)=\|V\|_{L^1}$.
Moreover $I_V$ is a flat current.
\end{lm}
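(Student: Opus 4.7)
The statement has three parts. The first two are essentially bookkeeping and will be handled quickly; the main content is the flatness assertion, which I would obtain by mollification plus Lemma \ref{lm:bemolle}.

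For the compact support, since in this paper $\Omega$ is assumed to be the closure of a bounded open set (hence compact), and since the defining integral $\langle I_V,\omega\rangle = \int_\Omega \omega(V)$ only sees $\omega$ through its values on $\Omega$, the support of $I_V$ is automatically contained in $\Omega$. For the mass identity, I would invoke the discussion in Remark \ref{distrrepint}: the current $I_V$ is represented by integration against the vector-valued Radon measure $V\,dx$, whose total variation measure is $|V|\,dx$. Thus
\[
\mathbb M(I_V)=\int_\Omega d|I_V|=\int_\Omega |V|\, dx=\|V\|_{L^1(\Omega)}.
\]
Alternatively one can get $\leq$ directly from the definition of mass (since $\|\omega(x)\|=|X_\omega(x)|$ and pointwise $|\omega(V)|\le |V|$ when $\|\omega\|\le 1$), and $\geq$ by testing with smooth forms approximating the unit vector field $V/|V|$ on the set $\{V\ne 0\}$.

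The core of the lemma is the flatness. By Lemma \ref{lm:bemolle}, since $I_V$ already has finite mass, it suffices to exhibit a sequence of normal $1$-currents $\{T_n\}$ such that $\mathbb M(I_V-T_n)\to 0$. I would take $V_n\in C^\infty(\overline\Omega;\mathbb R^N)$ with $V_n\to V$ in $L^1(\Omega;\mathbb R^N)$ (obtained by extending $V$ to $\mathbb R^N$ by zero and mollifying, then restricting, or by standard density results for $L^1$ on smooth domains). Setting $T_n:=I_{V_n}$, linearity of the construction gives $I_V-T_n=I_{V-V_n}$, and the mass computation above yields
\[
\mathbb M(I_V-T_n)=\|V-V_n\|_{L^1(\Omega)}\longrightarrow 0.
\]
It remains to check that each $T_n$ is normal, i.e.\ that $\mathbb M(T_n)+\mathbb M(\partial T_n)<\infty$. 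The first summand is $\|V_n\|_{L^1(\Omega)}<\infty$. For the boundary, for $\varphi\in C^\infty(\Omega)$ the definition gives $\langle \partial T_n,\varphi\rangle=-\int_\Omega \nabla\varphi\cdot V_n\,dx$; integration by parts (legitimate since $V_n$ is smooth up to the boundary and $\partial\Omega$ is smooth) yields
\[
\langle \partial T_n,\varphi\rangle=\int_\Omega \varphi\,\mathrm{div}\,V_n\,dx-\int_{\partial\Omega}\varphi\,(V_n\cdot\nu_\Omega)\,d\mathscr H^{N-1},
\]
so $\partial T_n$ is represented by a bounded function times $dx$ in $\Omega$ plus an $L^\infty$ density against $\mathscr H^{N-1}\llcorner\partial\Omega$, both of finite mass. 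Hence $T_n$ is normal and Lemma \ref{lm:bemolle} yields that $I_V$ is flat.

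The only delicate point I anticipate is the last step: ensuring the boundary term is controlled after smoothing. This is why I would smooth $V$ in a way that yields a sequence smooth up to $\partial\Omega$ (standard on smooth bounded domains), so that the boundary integral above makes sense and has finite mass. Once this is in place the rest of the argument reduces to the $L^1$ density of smooth vector fields and the already-established mass$=$$L^1$-norm identification.
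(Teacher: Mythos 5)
Your proof is correct and follows the same blueprint as the paper's: approximate $V$ by smooth fields in $L^1(\Omega;\mathbb R^N)$, verify the approximating currents are normal, and invoke Lemma \ref{lm:bemolle}; the mass identity and support statement are handled exactly as you do, via Remark \ref{distrrepint}. The one place you diverge is in verifying $\mathbb M(\partial T_n)<\infty$. You mollify $V$ so that it is smooth up to $\overline\Omega$, and integration by parts then produces a boundary term $-\int_{\partial\Omega}\varphi\,(V_n\cdot\nu_\Omega)\,d\mathscr H^{N-1}$, which you correctly identify as a finite Radon measure supported on $\partial\Omega\subset\Omega$. The paper instead defines $V_\varepsilon:=(V\cdot 1_{\Omega_\varepsilon})*\varrho_\varepsilon$ with $\Omega_\varepsilon=\{x\in\Omega:\mathrm{dist}(x,\partial\Omega)>2\varepsilon\}$, so that $V_\varepsilon$ has compact support strictly inside $\Omega$; integration by parts then yields $\partial I_\varepsilon=-\mathrm{div}\,V_\varepsilon\,dx$ with no boundary contribution at all, and the finiteness of $\mathbb M(\partial I_\varepsilon)$ reduces to $\|\mathrm{div}\,V_\varepsilon\|_{L^1}<\infty$. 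Both routes are valid. The cutoff sidesteps any discussion of traces and the boundary integral, which makes the normality check a line shorter and avoids invoking the divergence theorem on the full domain; your version is equally legitimate here because $\partial\Omega$ is assumed smooth, and in exchange your mollified sequence is simpler to describe (no cutoff parameter to track). Either way the key observation is the same: $\mathbb M(I_V-I_{V_n})=\|V-V_n\|_{L^1}$, so $L^1$ density of smooth fields plus Lemma \ref{lm:bemolle} immediately give flatness.
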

\begin{proof}
We just prove that $I_V$ is a flat current, the first statement being straightforward. To this aim we use the characterization of Lemma \ref{lm:bemolle} and we construct
the approximating currents by convolution. For every $\varepsilon\ll 1$ we define
\[
\Omega_\varepsilon=\{x\in\Omega\, :\, \mathrm{dist}(x,\partial\Omega)>2\, \varepsilon\}.
\]
Then we take a standard convolution kernel $\varrho\in C^\infty_0$ supported on the ball $\{x\,:\, |x|\le 1\}$ and we define
\[
\varrho_\varepsilon(x)=\varepsilon^{-N}\, \varrho\left(\frac{x}{\varepsilon}\right),\qquad x\in\mathbb{R}^N. 
\]
We also set 
$$
V_\varepsilon:=(V\cdot 1_{\Omega_\varepsilon})\ast\varrho_\varepsilon,
$$
where $1_E$ stands for the characteristic function of a set $E$. Define now $I_\varepsilon:=I_{V_\varepsilon}$ and observe that $V_\varepsilon$ (and thus $I_\varepsilon$) has compact support contained in $\Omega$. From the mass estimate and by H\"older inequality we obtain that masses are equi-bounded, since
$$
\mathbb M(I_\varepsilon)\leq \|V_\varepsilon\|_{L^1}\leq C\|V\|_{L^1}.
$$
The boundedness of $\partial I_\varepsilon$ follows via a similar strategy. As $V_\varepsilon$ has compact support (strictly contained) in $\Omega$, we have
\[
\begin{split}
|\langle \partial I_\varepsilon,\varphi\rangle| =|\langle I_\varepsilon,d\,\varphi\rangle|&=\left|\int_\Omega V_\varepsilon\cdot\nabla \varphi\, dx\right|\\
&=\left|\int_\Omega \mathrm{div}\,V_\varepsilon\, \varphi\, dx\right|\le \|\mathrm{div}\, V_\varepsilon\|_{L^1}\, \|\varphi\|_{L^\infty}.
\end{split}
\]
Setting $C_\varepsilon=\|\mathrm{div}\, V_\varepsilon\|_{L^1}$ and passing to the supremum on $\varphi$ we obtain
\[
\mathbb{M}(\partial I_\varepsilon)\le C_\varepsilon<+\infty.
\]
This implies that $\{I_\varepsilon\}_{\varepsilon>0}$ is a sequence of normal currents. The mass convergence $\mathbb{M}(I_\varepsilon-I)$ easily follows from the convergence of $V_\varepsilon$ to $V$ in $L^1(\Omega;\mathbb{R}^N)$.
\end{proof}
\begin{oss}
It is easily seen that the boundary $\partial I_V$ corresponds to the distributional divergence of $V$ i.e.
\[
\langle \partial I_V,\varphi\rangle=-\int_\Omega \nabla \varphi\cdot V\, dx
\qquad \mbox{ for every } \varphi\in  C^\infty(\Omega).
\]
This distribution has compact support in $\Omega$ as well. 
\end{oss}
\begin{defi}
A $1-$current $I$ is called {\it acyclic} if whenever we can write $I=I_1+I_2$, with $\mathbb{M}(I)=\mathbb{M}(I_1)+\mathbb{M}(I_2)$ and $\partial I_1=0$, there must result $I_1=0$.
\end{defi}
For $I=I_V$ with $V\in L^1$ we have the correspondence with Definition \ref{Vacyclic}, i.e. 
\begin{equation}
\label{aciclico}
V\text{ is acyclic}\Longleftrightarrow I_V\text{ is acyclic}.
\end{equation}
\subsection{Lipschitz curves as currents}
We recall that $\mathcal L(\Omega)$ is the space of equivalence classes of Lipschitz curves $\gamma:[0,1]\to\Omega$ (see the beginning of Section \ref{sec:lagrangian}), with the topology of uniform convergence.\\
Here we remark that to each $\gamma\in\mathcal L(\Omega)$ we may associate a vector valued distribution i.e. a $1-$current, denoted by $[\gamma]$ and defined by requiring 
$$
\langle [\gamma],\omega\rangle:=\int_{\gamma}\omega=\int_0^1\omega(\gamma(t))\, [\gamma'(t)]\,dt=\sum_{i=1}^N\int_0^1\omega_i(\gamma(t))\,\gamma'_i(t)\,dt,
$$
for all $\omega\in C^\infty(\Omega,\wedge^1 \mathbb{R}^N)$.
Note that this expression is well-defined on $\mathcal L(\Omega)$ since the integral on the right is invariant under reparameterization.
\vskip.2cm\noindent
For $\gamma\in\mathcal L(\Omega)$ we have $\mathbb M([\gamma])\leq\ell(\gamma):=\int_0^1 |\gamma'(t)|\, dt$ with equality exactly when $\gamma$ has a representative which is injective for $\mathscr H^1$-almost every time. We are interested in a stronger requirement, namely that the curve does not even intersect itself. In this case an injective representative exists (such curves are called ``arcs'' in \cite{PS}). We fix a notation for such classes of curves.
\begin{defi}[Arcs]
 We define $\widetilde{\mathcal L}(\Omega)$ the subset of $\mathcal L(\Omega)$ made of those classes of curves $\gamma:[0,1]\to\Omega$ which have an injective representative.
\end{defi}
\begin{oss}
 One could think of the above-defined arcs as ``acyclic curves'', where a ``cycle'' can mean two things: 
\begin{itemize}
\item we can have a cycle in the parameterization, where a cycle would be represented by a curve satisfying (up to reparameterization) $\gamma(t)=\gamma(1-t)$ and ``inserting a cycle of type $\gamma$'' in another curve $\overline\gamma$ such that $\overline\gamma(t_0)=\gamma(0)$ would result into the curve:
$$
\widetilde\gamma(s)=
\left\{
\begin{array}{ll}
\overline\gamma(2s)&\text{ if }s\in[0,t_0/2]\\
\gamma(2s-t_0)&\text{ if }s\in[t_0/2,(t_0+1)/2]\\
\overline\gamma(2s-1)&\text{ if }s\in[(t_0+1)/2,1].
\end{array}
\right.
$$
\item a curve which intersects itself (i.e. which has no injective parameterization) gives instead rise to a $1-$current which is not acyclic, since it has a reparameterization containing an injectively parameterized loop.
\end{itemize}
The fact that in the decomposition of acyclic currents one restricts to using just arcs (for which neither type of cycle occurs) is then another natural consequence of the robustness of the acyclicity requirement.
\end{oss}
On $1-$currents we consider the topology of distributions. The following result links the two topologies:
\begin{lm}
 \label{curvescurrents}
The map $\mathcal L(\Omega)\ni\gamma\mapsto[\gamma]$ as defined above is continuous on the sublevels of the length functional $\ell:\mathcal L(\Omega)\to\mathbb R$.
\end{lm}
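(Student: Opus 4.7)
\medskip

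The plan is to pass to constant-speed parametrizations so that Arzel\`a--Ascoli can be used, and then combine strong convergence of the curves with weak-$*$ convergence of their derivatives against the smooth test form. Fix $\omega\in C^\infty(\Omega,\wedge^1\mathbb R^N)$ and a sequence $\gamma_n\to\gamma$ in $\mathcal L(\Omega)$ with $\ell(\gamma_n)\le L$; the task reduces to showing $\langle[\gamma_n],\omega\rangle\to\langle[\gamma],\omega\rangle$. As a first step I would select in each class $\gamma_n$ the constant-speed representative $\hat\gamma_n\colon[0,1]\to\Omega$, which is $\ell(\gamma_n)$-Lipschitz and hence $L$-Lipschitz. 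This is exactly where the length bound is used: the family $\{\hat\gamma_n\}$ is now equi-Lipschitz and uniformly bounded (since $\Omega$ is compact), so Arzel\`a--Ascoli provides, along any subsequence, a further subsequence $\hat\gamma_{n_k}$ converging uniformly to some $L$-Lipschitz curve $\tilde\gamma$.

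The heart of the proof, and the step I expect to be the main obstacle, is identifying $\tilde\gamma$ with a representative of the class $\gamma$. By the definition of convergence in the metric $d$ there exist reparametrizations $\bar\gamma_n$ of $\gamma_n$ and $\bar\gamma$ of $\gamma$ with $\|\bar\gamma_n-\bar\gamma\|_\infty\to0$, but these witness reparametrizations are a priori unrelated to the constant-speed ones. To bridge them I would use that $\hat\gamma_n$ and $\bar\gamma_n$ both represent $\gamma_n$, so they differ by a continuous nondecreasing surjection $\sigma_n\colon[0,1]\to[0,1]$. Helly's selection theorem yields a pointwise limit $\sigma_\infty$, itself continuous nondecreasing surjective, and passing to the limit in $\bar\gamma_{n_k}=\hat\gamma_{n_k}\circ\sigma_{n_k}$ (or the reverse composition) shows that $\bar\gamma$ is a reparametrization of $\tilde\gamma$. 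Hence $\tilde\gamma$ lies in the class $\gamma$, and in particular $[\tilde\gamma]=[\gamma]$.

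Finally, the passage to the limit in the pairing is standard once the preceding steps are in place. The $L$-Lipschitz bound makes $\{\hat\gamma_{n_k}'\}$ bounded in $L^\infty([0,1];\mathbb R^N)$, so by Banach--Alaoglu it converges weakly-$*$ along a further subsequence, and the limit is forced to be the distributional derivative $\tilde\gamma'$. Meanwhile, by the uniform convergence $\hat\gamma_{n_k}\to\tilde\gamma$ and the continuity of $\omega$, the map $t\mapsto\omega(\hat\gamma_{n_k}(t))$ converges uniformly to $t\mapsto\omega(\tilde\gamma(t))$. Combining strong convergence of one factor and weak-$*$ of the other,
\begin{equation*}
\langle[\gamma_{n_k}],\omega\rangle=\int_0^1\omega(\hat\gamma_{n_k}(t))\cdot\hat\gamma_{n_k}'(t)\,dt\longrightarrow\int_0^1\omega(\tilde\gamma(t))\cdot\tilde\gamma'(t)\,dt=\langle[\tilde\gamma],\omega\rangle=\langle[\gamma],\omega\rangle.
\end{equation*}
Since every subsequence of $\{\langle[\gamma_n],\omega\rangle\}$ admits a sub-subsequence converging to the same limit $\langle[\gamma],\omega\rangle$, the full sequence converges, proving continuity of $\gamma\mapsto[\gamma]$ on $\{\ell\le L\}$ for the distributional topology on $1$-currents.
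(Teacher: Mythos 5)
Your overall strategy---pass to constant-speed representatives so the length bound yields an equi-Lipschitz family, extract a uniformly convergent subsequence by Arzel\`a--Ascoli, identify the limit with $\gamma$, and then pair strong convergence of $\omega\circ\hat\gamma_{n_k}$ with weak-$*$ convergence of $\hat\gamma_{n_k}'$---is exactly what the paper's one-line proof (``they converge uniformly; in particular they converge as distributions'') leaves implicit, and your write-up supplies the missing details correctly except at one point.

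The gap is in the identification step via Helly. Helly's selection theorem produces a pointwise limit $\sigma_\infty$ which is nondecreasing but \emph{need not be continuous or surjective}: a pointwise limit of continuous nondecreasing surjections can have jump discontinuities. If $\sigma_\infty$ jumps from $a$ to $b$ at $t_0$, the relation $\bar\gamma=\tilde\gamma\circ\sigma_\infty$ (which holds at continuity points) combined with the continuity of $\bar\gamma$ only gives $\tilde\gamma(a)=\tilde\gamma(b)$, not that $\tilde\gamma$ is constant on $[a,b]$; a priori $\tilde\gamma$ could trace a loop over $[a,b]$ that $\bar\gamma$ simply skips. So the assertion that ``$\bar\gamma$ is a reparametrization of $\tilde\gamma$'' does not follow from the Helly argument as written, and the conclusion $\tilde\gamma\sim\gamma$ is not yet established. (There is also the minor point that the definition of $d$ allows the witnessing representative of $\gamma$ to depend on $n$, which your notation $\bar\gamma$ suppresses.)

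The fix is both simpler and removes Helly entirely. Since $\hat\gamma_{n_k}$ is a representative of $\gamma_{n_k}$ and $\tilde\gamma$ of its own class, uniform convergence gives $d(\gamma_{n_k},\tilde\gamma)\le\|\hat\gamma_{n_k}-\tilde\gamma\|_\infty\to0$; together with $d(\gamma_{n_k},\gamma)\to0$ and the triangle inequality this forces $d(\tilde\gamma,\gamma)=0$, hence $\tilde\gamma=\gamma$ in $\mathcal L(\Omega)$ because $d$ is a metric (uniqueness of limits in a metric space). With this substitution the remainder of your argument---the weak-$*$ identification of the limiting derivative and the subsequence-of-subsequences principle---is sound. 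As a small further simplification, the Banach--Alaoglu step can be avoided: writing $\int_0^1\omega(\tilde\gamma)\cdot(\hat\gamma_{n_k}'-\tilde\gamma')\,dt$ and integrating by parts (legitimate since $t\mapsto\omega(\tilde\gamma(t))$ is Lipschitz) reduces everything to the uniform convergence $\hat\gamma_{n_k}\to\tilde\gamma$ and the $L^\infty$ bound on $\hat\gamma_{n_k}'$.
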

\begin{proof}
Assuming that $\gamma_i\to\gamma$ and $\ell(\gamma_i)\leq C$ we then obtain that $\gamma_i$ converge uniformly. In particular they converge as distributions.
\end{proof}
\subsection{Smirnov decomposition theorem}

We can now state the theorem on the decomposition of $1-$currents due to Smirnov \cite{S} and recently extended by Paolini and Stepanov in \cite{PS,PS2} to metric spaces. \begin{teo}
\label{smirnov}
 Suppose that $I$ is a normal acyclic $1-$current on $\Omega$. Then there exists $Q\in\mathscr{M}_+(\mathcal{L}(\Omega))$ concentrated on $\widetilde{\mathcal L}(\Omega)$ and such that the following decompositions of $I$ are valid in the sense of distributions:
\begin{equation}
\label{corrente}
I=\int_{\mathcal L(\Omega)}[\gamma]\,d Q(\gamma)\qquad \mbox{ and }\qquad\mu_I=\int_{\mathcal L(\Omega)}\mu_{[\gamma]}\,d Q(\gamma) ,
\end{equation}
\begin{equation}
\label{bordo}
\partial I=\int_{\mathcal L(\Omega)}\partial[\gamma]\,d Q(\gamma)\quad \mbox{ and }\qquad  \mu_{\partial I}=\int_{\mathcal L(\Omega)}\mu_{\partial[\gamma]}\, d Q(\gamma).
\end{equation}
\end{teo}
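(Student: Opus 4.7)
I would follow Smirnov's original strategy from \cite{S} (with the metric-space refinement of Paolini--Stepanov \cite{PS,PS2}), which builds $Q$ via an iterative extraction of arcs along $I$. As setup, apply Remark \ref{distrrepint} to view $I$ as a vector-valued Radon measure and use Radon--Nikodym to write $I=\tau_I\,\mu_I$ with $|\tau_I|=1$ at $\mu_I$-a.e.\ point, which gives a Borel tangent direction field; likewise decompose $\partial I=b_+-b_-$ by Hahn decomposition on $\Omega$.

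The core technical step is a single-curve extraction: given any normal $1$-current $J$ with $\mu_{\partial J}(\Omega)>0$, produce an injective Lipschitz arc $\gamma\in\widetilde{\mathcal L}(\Omega)$ and a weight $\lambda>0$ such that $J':=J-\lambda[\gamma]$ is still normal and the mass identities $\mu_{J'}=\mu_J-\lambda\mu_{[\gamma]}$ and $\mu_{\partial J'}=\mu_{\partial J}-\lambda\mu_{\partial[\gamma]}$ hold. The curve is built by starting at a Lebesgue-density point of $b_+^J$, following the direction field $\tau_J$ (interpreted rigorously through a Fubini-style slicing of $J$ along one-dimensional transversals, so as to enforce the mass cancellations), and terminating upon first entry into $\mathrm{supp}(b_-^J)$. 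Acyclicity of $J$ enters here by preventing the integral curve of $\tau_J$ from closing into a loop and by guaranteeing that the extracted piece truly connects $\mathrm{supp}(b_+^J)$ to $\mathrm{supp}(b_-^J)$ rather than circulating.

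Iterating this step yields at most countably many pairs $(\gamma_k,\lambda_k)$, which a priori need not exhaust $I$. To pass to a continuous superposition, I would approximate $I$ in the mass norm by polyhedral normal currents $I_n$ (whose density in normal currents sharpens Lemma \ref{lm:bemolle}), extract a discrete measure $Q_n\in\mathscr M_+(\mathcal L(\Omega))$ from each by finite iteration of the previous step, and take a weak-$\ast$ limit $Q$. Tightness of $\{Q_n\}$ is automatic since $\int\ell\,dQ_n=\mathbb M(I_n)$ is bounded and sublevels of $\ell$ are compact in $\mathcal L(\Omega)$; the continuity provided by Lemma \ref{curvescurrents} then permits passing to the limit inside the integrals in \eqref{corrente}--\eqref{bordo}, and the concentration on $\widetilde{\mathcal L}(\Omega)$ is preserved in the limit.

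The main obstacle is establishing the \emph{measure} identities (not merely the signed current ones) in the limit. The signed identities are formal once weak-$\ast$ convergence is set up, but $\mu_I$ and $\mu_{\partial I}$ could in principle strictly dominate $\int\mu_{[\gamma]}\,dQ$ and $\int\mu_{\partial[\gamma]}\,dQ$ if cancellations occurred between different extracted arcs. Acyclicity of $I$ is precisely what rules this out: any residual current $I_\infty:=I-\int[\gamma]\,dQ$ would by construction be mass-additive to the extracted part and would carry no boundary, hence the acyclicity hypothesis forces $I_\infty=0$, closing the argument.
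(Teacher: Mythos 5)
The paper does not prove Theorem~\ref{smirnov}: it is quoted as a known result, with the proof referred to Smirnov~\cite{S} and to Paolini--Stepanov~\cite{PS,PS2}. So there is no in-paper argument to compare yours against; what you have written is a sketch of how the cited references proceed, and it should be judged on those terms.

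As a reconstruction of that proof, the broad architecture you describe is recognizable, but you have put the entire technical weight of the theorem into the ``single-curve extraction'' step and then treated it as a black box. The phrase ``following the direction field $\tau_J$, interpreted rigorously through a Fubini-style slicing'' is exactly where Smirnov's paper spends most of its effort; one cannot integrate a merely $\mu_J$-measurable unit direction field, and no uniqueness or existence theory is available. The actual construction in~\cite{S} proceeds through a fine cubical partition of the domain, a combinatorial/transport argument inside each cube, and a diagonal limit, while~\cite{PS,PS2} replace this by an abstract superposition principle for metric currents. Asserting that an arc with the required additive mass identities can be peeled off is precisely the theorem in miniature, so as written your plan is circular at its core.

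Two further points deserve care. First, tightness of $\{Q_n\}$ needs both the length bound $\int \ell\, dQ_n=\mathbb{M}(I_n)\le C$ \emph{and} a uniform bound on the total masses $Q_n(\mathcal L(\Omega))=\tfrac{1}{2}\mathbb{M}(\partial I_n)$; the former alone controls only the tails $\{\ell>L\}$ but not the total mass, and curves of length zero (constants) form a non-compact set. You do have the second bound via $\mathbb{M}(\partial I_n)\to\mathbb{M}(\partial I)$, but you should say so explicitly. Second, the closing step is underjustified: you assert that the residual $I_\infty=I-\int[\gamma]\,dQ$ ``by construction'' carries no boundary and is mass-additive to the extracted part, and then invoke acyclicity. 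But establishing $\partial I_\infty=0$ and $\mathbb{M}(I)=\mathbb{M}(I_\infty)+\mathbb{M}\bigl(\int[\gamma]\,dQ\bigr)$ is exactly the content of the second half of~\eqref{corrente}--\eqref{bordo}; it is a conclusion of the theorem, not something that comes for free from a weak-$*$ limit, because mass is only lower semicontinuous and boundary mass can concentrate. Smirnov's argument obtains these identities by proving them at each finite stage and showing no mass is lost in the limit; you would need to reproduce that bookkeeping, not assume it.

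In short: since the paper treats this as a citation, there is nothing wrong with doing the same, but if you intend to \emph{prove} it, the extraction lemma and the no-mass-loss estimate in the limit are the two places where real work is required and where your sketch currently has gaps.
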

We now note down some reformulations of the items present in the above theorem in terms of measures and vector fields:
\begin{itemize}
 \item the total variation is the mass norm, i.e. $\mu_I(\Omega)=\mathbb M(I)$ and $\mu_{\partial I}(\Omega)=\mathbb M(\partial I)$;
\vskip.2cm
 \item if $V$ is a $L^1_{loc}$ vector field then $I_V$ has variation measure $\mu_{I_V}=|V|\cdot \mathscr{L}^N$;
\vskip.2cm
 \item if $\rho=\rho_+-\rho_-$ is the decomposition of a signed Radon measure into positive and negative part then $\mu_\rho=\rho_++\rho_-$;
\vskip.2cm
\item in particular for $\gamma\in\mathcal L(\Omega)$ we have $\mu_{\partial[\gamma]}=\delta_{\gamma(1)} + \delta_{\gamma(0)}$. Since this measure has total variation $2$ for all $\gamma$ we can quantify the total mass of the above $Q$ by means of the mass norm of the boundary of $I$. Namely, we have
$$
Q(\mathcal L(\Omega))=\frac{1}{2}\,\mu_{\partial I}(\Omega)=\frac{\mathbb M(\partial I)}{2};
$$
 \item for $\gamma\in \widetilde{\mathcal{L}}(\Omega)$ there holds $\mu_{[\gamma]}=\mathscr H^1\,\llcorner\,\mathrm{Im}(\gamma)$, i.e. this is the arclength measure of $\gamma$;
\vskip.2cm
\item by expanding the definitions and comparing to Section \ref{sec:lagrangian} we see that for $I=I_V$ with $V\in L^1(\Omega)$ and for $Q$ as in Theorem \ref{smirnov}, there holds 
\[
|V|\cdot\mathscr L^N=\mu_{I_V}=i_Q\cdot\mathscr L^N,\qquad V\cdot\mathscr L^N=I=\mathbf{i}_Q\cdot\mathscr L^N
\]
and
\[
\op{div}\,V=\partial I_V=(e_0 - e_1)_\# Q.
\]
\end{itemize}
\vskip.2cm
All these reformulations allow to translate Theorem \ref{smirnov} in the case of $I=I_V$, with $V\in L^p(\Omega)$ for $p\geq1$. This is the content of the next result. 

\begin{coro}[Reformulation of Theorem \ref{smirnov}]
\label{coro:smirnov}
Suppose that $V\in L^1(\Omega)$ is an acyclic vector field such that $\op{div}\,V$ is a Radon measure. Then there exists $Q\in\mathscr{M}_+(\mathcal{L}(\Omega))$ concentrated on $\widetilde{\mathcal L}(\Omega)$ and such that the following decompositions of $V$ are valid in the sense of distributions:
\begin{equation}
\label{V}
\mathbf{i}_Q=V\qquad \mbox{ and }\qquad i_Q=|V|,
\end{equation}
\begin{equation}
\label{Vbordo}
-\op{div}\,V=(e_1 - e_0)_\# Q\qquad \mbox{ and }\qquad (-\op{div}\,V)_+ +(-\op{div}\,V)_-=
(e_1 + e_0)_\# Q.
\end{equation}
\end{coro}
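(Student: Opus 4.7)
The plan is to reduce the statement to an application of Theorem \ref{smirnov} by lifting $V$ to the $1$-current $I_V$ of Lemma \ref{lm:flatL1}, and then translate the resulting decompositions of currents into the language of traffic intensities via the bullet-point dictionary preceding the corollary.

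First I would verify the hypotheses of Theorem \ref{smirnov} for $I_V$. Normality requires $\mathbb{M}(I_V)+\mathbb{M}(\partial I_V)<+\infty$. By Lemma \ref{lm:flatL1} we have $\mathbb{M}(I_V)=\|V\|_{L^1}<+\infty$, while the remark following that lemma identifies $\partial I_V$ with the distributional divergence $-\mathrm{div}\,V$; since this divergence is by assumption a Radon measure on the compact set $\Omega$, it has finite total variation and hence $\mathbb{M}(\partial I_V)=|\mathrm{div}\,V|(\Omega)<+\infty$. Acyclicity of $I_V$ follows from the acyclicity of $V$ via the equivalence \eqref{aciclico}. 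Smirnov's theorem then delivers $Q\in\mathscr{M}_+(\mathcal{L}(\Omega))$ concentrated on the arcs $\widetilde{\mathcal{L}}(\Omega)$ for which both decompositions \eqref{corrente} and \eqref{bordo} hold.

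Next I would match the four decomposition identities against the definitions of $\mathbf{i}_Q$, $i_Q$ and the pushforward measures $(e_i)_\#Q$ from Section \ref{sec:lagrangian}. Pairing the first identity in \eqref{corrente} with a smooth $1$-form $\omega$ of associated vector field $X_\omega$ gives
\[
\int_\Omega X_\omega\cdot V\,dx=\langle I_V,\omega\rangle=\int_{\mathcal{L}(\Omega)}\int_0^1\omega(\gamma(t))\,[\gamma'(t)]\,dt\,dQ(\gamma)=\langle\mathbf{i}_Q,X_\omega\rangle,
\]
and by density of smooth forms in $C(\Omega;\mathbb{R}^N)$ this yields $\mathbf{i}_Q=V$ as vector-valued Radon measures. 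For the second identity in \eqref{corrente}, the dictionary gives $\mu_{I_V}=|V|\cdot\mathscr{L}^N$ and, because $Q$ lives on arcs, $\mu_{[\gamma]}=\mathscr{H}^1\llcorner\mathrm{Im}(\gamma)$; applying the area formula to each injective representative transforms $\mathscr{H}^1\llcorner\mathrm{Im}(\gamma)$ into $|\gamma'(t)|\,dt$ and produces, for $\varphi\in C(\Omega)$,
\[
\int_\Omega\varphi\,|V|\,dx=\int_{\mathcal{L}(\Omega)}\int_0^1\varphi(\gamma(t))\,|\gamma'(t)|\,dt\,dQ(\gamma)=\langle i_Q,\varphi\rangle,
\]
so $i_Q=|V|$, which is \eqref{V}.

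Finally I would treat the boundary identities \eqref{bordo}. Using $\partial[\gamma]=\delta_{\gamma(1)}-\delta_{\gamma(0)}$ together with $\partial I_V=-\mathrm{div}\,V$ gives, for $\varphi\in C(\Omega)$,
\[
\langle -\mathrm{div}\,V,\varphi\rangle=\int_{\mathcal{L}(\Omega)}\bigl[\varphi(\gamma(1))-\varphi(\gamma(0))\bigr]\,dQ(\gamma)=\langle (e_1-e_0)_\#Q,\varphi\rangle,
\]
which is the first half of \eqref{Vbordo}. The second half follows from the variation identity in \eqref{bordo} combined with $\mu_{\partial I_V}=(-\mathrm{div}\,V)_++(-\mathrm{div}\,V)_-$ and $\mu_{\partial[\gamma]}=\delta_{\gamma(0)}+\delta_{\gamma(1)}$, giving $(-\mathrm{div}\,V)_++(-\mathrm{div}\,V)_-=(e_0+e_1)_\#Q$. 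No real obstacle stands in the way here: once normality and acyclicity of $I_V$ have been checked, the proof is a routine unwrapping of Smirnov's theorem. The only point that deserves some care is the use of the area formula on arcs to identify $\mathscr{H}^1\llcorner\mathrm{Im}(\gamma)$ with the parameterized measure appearing in the definition of $i_Q$, and this is precisely the reason the conclusion that $Q$ is concentrated on $\widetilde{\mathcal{L}}(\Omega)$ is essential.
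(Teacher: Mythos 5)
Your proof is correct and follows the same route as the paper: apply Smirnov's Theorem \ref{smirnov} to $I_V$ after verifying normality (via Lemma \ref{lm:flatL1} and the Radon-measure hypothesis on $\mathrm{div}\,V$) and acyclicity (via \eqref{aciclico}), then unwind the four identities through the bullet-point dictionary. One small caveat: with the paper's convention $\langle\partial I,\varphi\rangle=-\langle I,d\varphi\rangle$ one actually has $\partial I_V=\mathrm{div}\,V$ and $\partial[\gamma]=\delta_{\gamma(0)}-\delta_{\gamma(1)}$, not the signs you wrote (you used the standard Federer convention); the two sign flips cancel, so your final identities in \eqref{Vbordo} are nonetheless correct.
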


\subsection{The case of flat currents}

In Section \ref{sec:lagrangian} we required $T\in \dot W^{-1,p}(\Omega)$ to be a Radon measure. As already mentioned, this further hypothesis permits to identify optimal vector fields for Beckmann's problems with acyclic normal currents. Then well-posedness and equivalence of the problems can be obtained by means of Smirnov's Theorem. However in the setting of Beckmann's problem and of its dual it would be natural to allow $T$ to be a generic element of $\dot W^{-1,p}(\Omega)$. If one whishes to extend the analysis of the Lagrangian formulation to this larger space then one is naturally lead to consider a possible extension of Smirnov's result to $L^1$ vector fields having divergence which is not a Radon measure. Observe that {\it such vector fields correspond to flat currents} (see Lemma \ref{lm:flatL1}). In this subsection we investigate the possibility to have Smirnov's Theorem for such a class of currents.
\vskip.2cm\noindent
We start by observing that the measure $Q$ which decomposes $I$ may not be finite in general.  
\begin{exa}
 \label{curvette}
For $1\le p<\frac{N}{N-1}$ we consider an infinite sequence of small dipoles $\left\{(a_i,b_i)\right\}_i$ such that
$$
\sum_{i=1}^\infty|a_i-b_i|^{N-p(N-1)}<+\infty, \qquad\text{ and } D_{a_i,b_i}\text{ are disjoint},
$$
where the sets $D_{a_i,b_i}$ are defined as in Example \ref{curvette1}. 
If we consider the vector fields $V_{a_i,b_i}$ as in Example \ref{curvette1} then the new vector field defined by $V=\sum_{i=1}^\infty V_{a_i,b_i}$ verifies
$$
\|V\|_{L^p}^p=\left\|\sum_{i=1}^\infty V_{a_i,b_i}\right\|^p_{L^p}\leq C_N\, \sum_{i=1}^\infty |a_i-b_i|^{N-p\,(N-1)}<+\infty,
$$
which implies $T:=\sum_i(\delta_{a_i} - \delta_{b_i})\in \dot W^{-1,p}(\Omega)$.
By observing that $\infty=\mathbb M(T)=\int_{\mathcal L(\Omega)}dQ$ for any decomposing measure we see that no finite measure $Q$ can be found. On the other hand a $\sigma-$finite measure $Q$ can be found, since each $V_{a_i,b_i}$ can be separately decomposed with a measure $Q_i$ of mass $2$ and the $Q_i$ have disjoint supports. 
\end{exa}
\begin{exa}
We present now another version of Example \ref{curvette}, which exploits the Sobolev embedding theorem. Let us take again $1\le p<N/(N-1)$, that is $q=p/(p-1)>N$. Then $\dot W^{1,q}(\Omega)$ can be identified with a space of functions which are H\"older continuous of exponent $\alpha=1-N/q$. We consider the following two curves
\[
\gamma_1(t)=\frac{1}{t^{2/\alpha}}\, (\cos t,\sin t)\qquad \mbox{ and }\qquad\gamma_2(t)=\frac{g(t)}{t^{2/\alpha}}\, (\cos t,\sin t),\qquad t\ge 1,
\]
where $g:[1,\infty)\to\mathbb{R}^+$ is a continuous function such that 
\[
1>g(t)>\left(\frac{t}{t+2\, \pi}\right)^{2/\alpha}\qquad \mbox{ and }\qquad t\mapsto \frac{g(t)}{t^{2/\alpha}} \mbox { is decreasing}.
\]
We define the distribution
\[
\langle T,\varphi\rangle=\int_1^\infty \Big[\varphi(\gamma_1(t))-\varphi(\gamma_2(t))\Big]\, dt,\qquad \varphi\in C^\infty(\Omega).
\]
This is an element of $\dot W^{-1,p}(\Omega)$ since by Sobolev embedding $[\varphi]_{C^{0,\alpha}}\le C_\Omega\,\|\varphi\|_{\dot W^{1,q}}$.  Thus
\[
\begin{split}
|\langle T,\varphi\rangle|\le \int_1^\infty |\varphi(\gamma_1(t))-\varphi(\gamma_2(t))|\, dt&\le C\, \|\varphi\|_{W^{1,q}(\Omega)}\, \int_1^\infty |\gamma_1(t)-\gamma_2(t)|^\alpha\, dt\\
&=C\,\|\varphi\|_{W^{1,q}(\Omega)}\,\int_1^\infty \frac{|1-g(t)|^\alpha}{t^2}\, dt\\
&\le 2^\alpha\, C\,\|\varphi\|_{W^{1,q}(\Omega)},\qquad \varphi\in \dot W^{1,q}(\Omega).
\end{split}
\]
We then introduce the measure on paths $Q_T$ defined by
\[
Q_T=\int_1^\infty\, \delta_{\,\overline{\gamma_1(t)\, \gamma_2(t)}}\, dt,
\]
where for $t\ge1$ we indicate by $\overline{\gamma_1(t)\, \gamma_2(t)}$ the straight segment going from $\gamma_1(t)$ to $\gamma_2(t)$.
Observe that for every $\varphi$ we have
\[
\int_{\mathcal{L}(\overline\Omega)} [\varphi(\gamma(0))-\varphi(\gamma(1))]\, dQ_T(\gamma)=\int_1^\infty \Big[\varphi(\gamma_1(t))-\varphi(\gamma_2(t))\Big]\, dt=\langle T,\varphi\rangle
\]
and
\[
\begin{split}
\int_{\mathcal{L}(\overline\Omega)} \ell(\gamma)\, dQ_T(\gamma)=\int_1^\infty |\gamma_1(t)-\gamma_2(t)|\, dt&=\int_1^\infty \frac{|1-g(t)|}{t^{2/\alpha}}\, dt\\
&\le \frac{2\,\alpha}{\alpha-2}\, t^{1-\frac{2}{\alpha}}\Big|_{1}^\infty=\frac{2\,\alpha}{2-\alpha}<\infty,
\end{split}
\]
while $Q_T$ is not finite, but just $\sigma$-finite.
\end{exa}
The previous examples clarify that we cannot hope to give a distributional meaning to the positive and negative parts of the divergence of $V$. The good definition of $\op{div}\,V$ as a distribution relies in general on some sort of ``almost--cancellation''. Therefore the last no-cancellation requirement $(-\op{div}\,V)_+ +(-\op{div}\,V)_-=(e_1 + e_0)_\# Q$ 
of Smirnov's Theorem \ref{smirnov} should be relaxed when we try to extend it to a larger class of $V$'s.
\vskip.2cm\noindent
Actually we can say more. For general flat currents even the existence of a (possibly $\sigma$-finite) decomposition $Q$ satisfying just \eqref{corrente} is not granted, as shown in the next example.

\begin{exa}
Let $\Omega=[0,1]^N\subset\mathbb{R}^N$ and let us consider a totally disconnected closed set $E\subset \Omega$ such that $\mathscr{L}^N(E)>0$. We then pick a vector $\vartheta_0\in \mathbb{R}^N\setminus\{0\}$ and set
\[
V(x)=\vartheta_0\cdot 1_E(x),\qquad x\in\Omega.
\]
Of course this is an $L^\infty(\Omega)$ vector field and thanks to Lemma \ref{lm:flatL1} we have that the associated $1-$current $I_V$ is flat. We claim that {\it $I_V$ does not admit a Smirnov decomposition}. 
Indeed, assume by contradiction that a decomposition $Q\in\mathscr{M}_+(\mathcal{L}(\Omega))$ satisfying the following condition (equivalent to \eqref{corrente}) exists:
\begin{equation}
 \label{correntecantor}
\mu_{I_V}=\int_{\mathcal L(\Omega)} \mu_{[\gamma]}\, dQ(\gamma).
\end{equation}
We see that $\op{spt}(\mu_{I_V})=\op{spt}(I_V)$ and from this we can infer that $Q-$a.e. curve $\gamma$ has support included in $\op{spt}(I_V)=E$ (see also \cite[Remark 5]{S}).
Indeed, if the latter were not true then the supports of the two sides of \eqref{correntecantor} would differ, as follows by observing that the left-hand side is a superposition of the positive measures $\mu_{[\gamma]}$. 
\par
By knowing now that $Q-$a.e. curve $\gamma$ has support in the totally disconnected set $E$ and that the curves $\gamma$ are connected, we deduce that $Q-$a.e. curve $\gamma$ is constant. This implies that 
\[
\mbox{for $Q-$a.e. curve $\gamma$ there holds }[\gamma]=0.
\]
Therefore from \eqref{correntecantor} it follows $\mu_{I_V}=0$, which contradicts the fact that 
\[
 \mu_{I_V}(\Omega)=\mathbb M(I_V)=\int_\Omega |V|\, dx>0.
\]
Since the existence of $Q$ satisfying \eqref{correntecantor} leads to a contradiction, we conclude that no Smirnov decomposition of $I_V$ exists. Note that whether $Q$ is assumed to be finite or only $\sigma-$finite is immaterial for this contradiction.
\par
We point out that by defining the distribution  $T$ as
\[
\langle T,\varphi\rangle=\int_\Omega V(x)\cdot \nabla \varphi(x)\, dx=\int_E \vartheta_0\cdot \nabla \varphi(x)\, dx,\qquad \mbox{ for every }\varphi\in C^1(\Omega),
\] 
this can be considered as an element of $\dot W^{-1,p}(\Omega)$ for any $1<p<\infty$, thanks to Lemma \ref{lm:cara}.
\par
We also claim that $V$ (and thus $I_V$) {\it is acyclic}. Suppose that we can write $V=V_1 +V_2$ with $|V|=|V_1|+|V_2|$ and $\op{div}\,V_1=0$. This implies that $V_1=\lambda_1\, \vartheta_0\cdot1_E$, where $\lambda_1\in L^1(\Omega)$ and it takes values in $[0,1]$. In particular as above we have
\[
0= \langle -\op{div}\,V_1,\varphi\rangle=\int_E \lambda_1(x)\, \vartheta_0\cdot \nabla \varphi(x)\, dx,\qquad \mbox{ for every }\varphi\,\in C^1(\Omega).
\]
By taking $\varphi(x)=\vartheta_0\cdot x$ we observe that the integral is nonzero unless $\lambda_1\equiv 0$ a.e. on $E$, in which case $V_1=0$. By appealing to Definition \ref{Vacyclic} we eventually prove that $V$ is acyclic.
\end{exa}
\begin{ack}
This work started during the conference ``Monge-Kantorovich optimal transportation problem, transport metrics and their applications'' held in St. Petersburg in June 2012. The authors wish to thank the organizers for the kind invitation. Guillaume Carlier and Eugene Stepanov are gratefully acknowledged for some comments on a preliminary version of the paper. L.B. has been partially supported by the {\it Agence National de la Recherche} through the project ANR-12-BS01-0014-01 {\sc Geometrya}.
\end{ack}

\end{document}